\newtheorem{theorem}{Theorem}[section]
\newtheorem{lemma}[theorem]{Lemma}
\newtheorem{definition}[theorem]{Definition}
\newtheorem{conj}[theorem]{Conjecture}
\begin{document}

\title{The deformation Space of Geodesic Triangulations and Generalized Tutte's Embedding Theorem}

\author{Yanwen Luo, Tianqi Wu, Xiaoping Zhu}
\thanks{Acknowledgement: The author was supported in part by NSF 1760471, NSF DMS FRG 1760527 and NSF DMS 1811878.}

\keywords{geodesic triangulations, Tutte's embedding}

\begin{abstract}
We proved the contractibility of the deformation space of the geodesic triangulations on a closed surface of negative curvature. This solves an open problem proposed by Connelly et al. in 1983 \cite{connelly1983problems}, in the case of hyperbolic surfaces. The main part of the proof is a generalization of Tutte's embedding theorem for closed surfaces of negative curvature.

\end{abstract}

\maketitle

\section{Introduction}

In this paper, we study the deformation space of geodesic triangulations of a surface within a fixed homotopy type. Such space can be viewed as a discrete analogue of the space of surface diffeomorphisms homotopic to the identity. Our main theorem is the following.
\begin{theorem}
For a closed orientable surface of negative curvature, the space of geodesic triangulations in a homotopy class is contractible. In particular, it is connected.
\end{theorem}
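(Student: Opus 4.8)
The plan is to deduce the theorem from a \emph{generalized Tutte embedding theorem} — the assertion that every discrete harmonic map with positive edge weights is an embedded geodesic triangulation — together with the convex geometry of discrete Dirichlet energies on a negatively curved surface. First I would build a finite-dimensional model of the deformation space. Pass to the universal cover $\pi\colon\widetilde S\to S$, a Hadamard surface of strictly negative curvature carrying an isometric action of $\Gamma=\pi_1(S)$. Fix the combinatorial triangulation $\mathcal T$ with vertex set $V$; a geodesic triangulation of $S$ isotopic to $\mathcal T$ is precisely a $\Gamma$-equivariant placement $p\colon\widetilde V\to\widetilde S$ of the lifted vertices whose geodesic realization of the $1$-skeleton is an embedded triangulation of $\widetilde S$. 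Choosing one vertex per $\Gamma$-orbit identifies the space $\mathcal M$ of all such placements with $\widetilde S^{\,n}$, which is contractible; the deformation space $X\subset\mathcal M$ — where every triangle is non-degenerate, the triangles around each vertex appear in the correct cyclic order, and the realization has degree one — is cut out by open, locally constant conditions, hence is open in $\mathcal M$. It therefore suffices to prove that the open set $X$ is contractible.

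Next I would introduce discrete harmonic maps. Given a positive weight $c$ on the oriented edges of $\mathcal T$, call an equivariant placement $p$ \emph{$c$-harmonic} if
\[
\sum_{u\sim v} c_{v\to u}\,\exp^{-1}_{p(v)}\bigl(p(u)\bigr)=0
\]
at every vertex $v$. For symmetric $c$ this is the Euler--Lagrange equation of the discrete Dirichlet energy $E_c(p)=\sum_{\{u,v\}}c_{uv}\,d_{\widetilde S}(p(u),p(v))^2$; since $\widetilde S$ has negative curvature, $d_{\widetilde S}(\cdot,\cdot)^2$ is strongly convex along geodesics, so $E_c$ is strictly convex and proper on $\mathcal M$ — it has no flat direction, because one would force all vertices to translate along a single geodesic line preserved by every element of the surface group $\Gamma$, which is impossible. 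Hence $E_c$ has a unique minimizer $\Phi(c)\in\mathcal M$ depending continuously on $c$; for general (non-symmetric) positive $c$, existence and uniqueness of the $c$-harmonic placement $\Phi(c)$ follow by a continuity/monotonicity argument deforming $c$ to a symmetric weight. The generalized Tutte embedding theorem is exactly the statement that $\Phi(c)\in X$ for every positive weight $c$: on a negatively curved closed surface, a discrete harmonic map is automatically a geodesic triangulation.

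With these ingredients the contraction is short. Let $W$ be the convex — hence contractible — set of positive weights on the oriented edges. By the embedding theorem $\Phi\colon W\to X$ is a well-defined continuous map. Conversely, every $p\in X$ is $c$-harmonic for some positive $c$: around a vertex $v$ the vectors $\exp^{-1}_{p(v)}(p(u))$ to the neighbours have total angular turning $2\pi$ with each consecutive gap less than $\pi$, so $0$ lies in the interior of their convex hull and the positive weights balancing them at $v$ form a nonempty open polytope; a continuous selection from these polytopes (e.g. mean-value-type weights) yields a continuous section $s\colon X\to W$ with $\Phi\circ s=\mathrm{id}_X$. Thus $X$ is a retract of the contractible space $W$ and is therefore contractible — in particular nonempty and connected — proving the theorem. (Equivalently, fixing unit weights one can flow by the gradient of the convex energy $E_c$, which contracts $\mathcal M$ onto the single point $\Phi(c)$; here the role of the embedding theorem is that this flow preserves $X$.)

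The main obstacle is the generalized Tutte theorem itself; everything around it — the finite-dimensional model, strict convexity and properness of $E_c$, the section $s$, and the retraction — is routine. The difficulty is to rule out degenerate or ``folded'' triangles for a discrete harmonic map when there is no ambient linear structure: Tutte's hypothesis that the outer boundary sits in convex position must be replaced by the global topology of the closed surface $S$, and the key monotonicity — a $c$-harmonic vertex cannot lie on or beyond the geodesic spanned by two of its neighbours — has to be extracted from comparison geometry in the $\mathrm{CAT}(0)$ universal cover, after which a winding-number/degree count promotes local injectivity to a global embedding. Closely related is establishing existence and uniqueness of $c$-harmonic placements for non-symmetric positive weights, which likewise falls outside the straightforward convex-optimization regime.
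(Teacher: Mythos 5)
Your proposal is essentially the paper's argument: the paper likewise parametrizes geodesic mappings by equivariant vertex placements in $\tilde M^n$, builds the map $\Phi$ from the contractible space of positive weights $\mathbb R^{\vec E}_{>0}$ to $w$-balanced geodesic mappings (existence, uniqueness, continuity), proves the generalized Tutte theorem that any balanced mapping is an embedded geodesic triangulation, and uses Floater's mean value coordinates as a continuous section $\Psi$ with $\Phi\circ\Psi=\mathrm{id}_X$, concluding contractibility because $X$ is dominated by a contractible space. The only divergence lies in how the deferred ingredients would be proved---you suggest convexity of the discrete Dirichlet energy plus a deformation from symmetric to non-symmetric weights, whereas the paper establishes existence via a degree-theoretic properness argument and uniqueness via a CAT(0) maximum principle---but you correctly flag these, together with the embedding theorem, as the substantive remaining work, exactly as the paper isolates them in its Theorems 1.5 and 1.6.
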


The group of diffeomorphisms of a smooth surface is one of the fundamental objects in the study of low dimensional topology. Determining the homotopy types of diffeomorphism groups has profound implications to a wide range of problems in  Teichmuller spaces, mapping class groups, and  geometry and topology of 3-manifolds.
Smale \cite{smale1959diffeomorphisms} proved that the group of diffeomorphisms of a closed 2-disk which fix the boundary pointwisely is contractible. This enables him to show that the group of orientation-preserving diffeomorphisms of the 2-sphere is homotopic equivalent to $SO(3)$ \cite{smale1959diffeomorphisms}.
Earle-Eells \cite{earle1969fibre} identified the homotopy type of the group of the diffeomorphisms homotopic to the identity for any closed surface. In particular, such topological group is contractible for a closed orientable surface with genus greater than one, consisting with our Theorem 1.1 for the discrete analogue.

Cairns \cite{cairns1944isotopic} initiated the investigation of the topology of the space of geodesic triangulations, and proved that if the surface is a geometric triangle in the Euclidean plane, the space of geodesic triangulations with fixed boundary edges is connected. A series of further developments culminated in a discrete version of Smale's theorem proved by Bloch-Connelly-Henderson \cite{bloch1984space} as follows.

\begin{theorem}
The space of geodesic triangulations of a convex polygon with fixed boundary edges is homeomorphic to some Euclidean space. In particular, it is contractible.
\end{theorem}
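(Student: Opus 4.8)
The plan is to deduce the statement, or at least its contractibility content, from Tutte's embedding theorem, by sweeping the whole space of geodesic triangulations onto a single Tutte embedding. First I would fix coordinates: with $T$ the abstract triangulated disk, $B$ its boundary and $I$ its interior vertices ($|I|=n$), a geodesic triangulation is the straight-line (affine on each triangle) map $f_p$ determined by the positions $p\colon I\to\mathbb{R}^2$, the vertices of $B$ being pinned to the vertices of the fixed convex polygon $Q$ in cyclic order. Thus the space $X$ in question is a subset of $(\mathbb{R}^2)^{n}$, which one checks is open (a small perturbation of an embedding keeps every triangle positively oriented and keeps the total turning of the triangle fan at each vertex locally constant, and such a straight-line map with convex boundary $\partial Q$ is an embedding by a standard degree argument). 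The engine is Tutte's theorem in Floater's form: for any assignment $c$ of weights $c_{vu}>0$ to ordered pairs $(v,u)$ with $v\in I$, $u\sim v$, normalized by $\sum_{u\sim v}c_{vu}=1$, the linear system $f(v)=\sum_{u\sim v}c_{vu}f(u)$ ($v\in I$, boundary values prescribed) has a unique solution $\tau(c)$ and $\tau(c)\in X$. Writing $W=\prod_{v\in I}\Delta_v^{\circ}$ for the product of the open weight simplices (that at $v$ having dimension $\deg v-1$), this gives a continuous map $\tau\colon W\to X$ from a convex, hence contractible, space; in particular $X\neq\varnothing$.

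The heart of the argument is the converse: I would show every $f\in X$ is a convex-combination map, canonically and continuously in the weights. Given $f\in X$ and an interior vertex $v$ with neighbours $u_1,\dots,u_k$, the $f$-image of the closed star of $v$ is an embedded disk with $f(v)$ in its interior and covered by the triangles $f(v)f(u_j)f(u_{j+1})$; were $f(v)$ not in the interior of $\mathrm{conv}\{f(u_1),\dots,f(u_k)\}$, a line through $f(v)$ with all the $f(u_j)$ on one side would confine this star to a closed half-plane, contradicting interiority. Hence $f(v)\in\mathrm{int}\,\mathrm{conv}\{f(u_j)\}$, so
\[
S_v(f)=\Bigl\{\,\mu\in\Delta_v^{\circ}\ :\ \textstyle\sum_{j}\mu_j f(u_j)=f(v)\,\Bigr\}
\]
is nonempty; I would let $\lambda(f)^{v}$ be the unique maximizer of $\prod_j\mu_j$ over $S_v(f)$, which exists by compactness and strict log-concavity and depends continuously on $f$. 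By construction $f$ solves the convex-combination system for $\lambda(f)=(\lambda(f)^{v})_{v\in I}$, so uniqueness in Tutte's theorem forces $\tau(\lambda(f))=f$; thus $\lambda\colon X\to W$ is a continuous section of $\tau$.

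Contractibility then falls out at once: fixing the uniform weights $c_0\in W$ and setting $H(f,t)=\tau\bigl((1-t)\lambda(f)+tc_0\bigr)$ for $(f,t)\in X\times[0,1]$ gives a continuous map into $X$ (the argument of $\tau$ stays in the convex set $W$) with $H(\cdot,0)=\mathrm{id}_X$ and $H(\cdot,1)$ the constant map to $\tau(c_0)$. The hard part is the stronger claim that $X$ is homeomorphic to a Euclidean space, which does not follow from contractibility alone: a contractible open subset of $\mathbb{R}^{2n}$ need not be homeomorphic to $\mathbb{R}^{2n}$ (the Whitehead manifold), and even the identity $W\cong X\times\mathbb{R}^{\sum_v(\deg v-3)}$ that the fibres of $\tau$ make plausible would not close the gap, since the Whitehead manifold becomes $\mathbb{R}^4$ after multiplication by $\mathbb{R}$. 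To obtain the homeomorphism I would analyze $X$ intrinsically, by induction on the combinatorial complexity of $T$: realize $X(T)$ as a product $X(T')\times\mathbb{R}^{k}$ for a triangulation $T'$ gotten from $T$ by an edge contraction or vertex split, using the Tutte map to pin down the affected vertices, and terminate when $T$ has no interior vertex and $X(T)$ is a point. I expect this inductive step — upgrading contractibility to the exact homeomorphism type — to be the main obstacle; everything through contractibility is forced by the picture above.
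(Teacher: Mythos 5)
Your argument through contractibility is essentially sound, and it is essentially the route of the ``simple proof'' that this paper cites for that part of the statement: run Tutte's theorem (in Floater's convex-combination form) to get a continuous map $\tau$ from the contractible space of weights onto the space $X$ of geodesic triangulations, then build a continuous section of $\tau$ and contract along straight lines in weight space. The only difference is your choice of section: you select, for each $f\in X$, the representation of each interior vertex as a convex combination of its neighbours that maximizes $\prod_j\mu_j$, whereas the argument referenced by the paper (and used again in its proof of Theorem 1.4) takes Floater's mean value coordinates, which give an explicit formula for a weight $w$ with respect to which $f$ is balanced, so that $\tau(\Psi(f))=f$ directly. Your entropy-maximizing section works too --- nonemptiness of $S_v(f)$, uniqueness of the maximizer by strict log-concavity, and continuity in $f$ are all fine --- it is just less explicit than the mean value coordinates; both yield $\tau\circ\lambda=\mathrm{id}_X$ and hence the null-homotopy $H(f,t)=\tau\bigl((1-t)\lambda(f)+tc_0\bigr)$.

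However, the statement you were asked to prove is stronger than contractibility: it asserts that $X$ is homeomorphic to a Euclidean space, and this part is genuinely missing from your proposal. You correctly observe that contractibility of an open subset of $\mathbb{R}^{2n}$ does not give its homeomorphism type (Whitehead-type examples), and that the fibration-like picture $W\to X$ with convex fibres of dimension $\sum_v(\deg v-3)$ does not close the gap either; but the inductive scheme you then sketch (realizing $X(T)\cong X(T')\times\mathbb{R}^k$ via edge contractions or vertex splits, pinned down by the Tutte map) is not carried out, and it is far from clear that an edge contraction induces such a product decomposition --- contracting an edge changes which straight-line maps are embeddings in a way that is not obviously captured by a trivial $\mathbb{R}^k$ factor. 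In the paper this theorem is not proved at all; it is quoted from Bloch--Connelly--Henderson, whose proof of the Euclidean-space statement is a separate and substantially more involved argument, not a corollary of the Tutte-map picture. So as it stands your proposal establishes only the ``in particular'' clause, and the main clause remains an unproved (and honestly flagged) gap.
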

A simple proof of the contractibility of the space above is provided in \cite{1910.03070} using Tutte's embedding theorem \cite{tutte1963draw}. It also provides examples showing that the homotopy type of this space can be complicated if the boundary of the polygon is not convex.
For closed surfaces, it is conjectured in \cite{connelly1983problems} that
\begin{conj}
The space of geodesic triangulations of a closed orientable surface  with constant curvature deformation retracts to the group of isometries of the surface homotopic to the identity.
\end{conj}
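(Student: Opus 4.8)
The plan is to prove the conjecture case by case according to the sign of the constant curvature, reducing each case to a Tutte-type embedding theorem; the negative curvature case is then exactly Theorem 1.1. For a closed orientable surface $S$ of genus $g\ge 2$ carrying a hyperbolic metric, the identity component $\mathrm{Isom}_0(S)$ of its isometry group is trivial, so the conjecture asserts precisely that the space $\mathcal{G}$ of geodesic triangulations in a fixed homotopy class deformation retracts to a point, i.e. is contractible. This is the content of Theorem 1.1, obtained from the generalized Tutte embedding theorem: for each choice of a system of positive edge weights one constructs a canonical ``balanced'' geodesic realization of the given combinatorial triangulation, shows it is an embedding using the convexity available in negative curvature, and then retracts $\mathcal{G}$ onto a single canonical triangulation by a straight-line homotopy in the space of weight systems.

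For zero curvature the surface is a flat torus and $\mathrm{Isom}_0(S)\cong T^2$ acts by translations. Here I would establish the flat analogue of Tutte's theorem: for fixed positive edge weights, the discrete-harmonic (vertex-balanced) realization of a combinatorially triangulated torus is a geodesic triangulation, unique up to translation, and depends on the weights in a contractible manner. This yields a deformation retraction of $\mathcal{G}$ onto the translation orbit of one canonical triangulation, an embedded copy of $T^2=\mathrm{Isom}_0(S)$. The only additional wrinkle is to normalize away the translation ambiguity so that the retraction is globally well defined and equivariant under the $T^2$-action.

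For positive curvature the surface is the round sphere, $\mathrm{Isom}_0(S^2)=SO(3)$, and the target statement is a discrete counterpart of Smale's theorem; this is where I expect the main obstacle. Tutte's method rests on global convexity and barycentric arguments that have no spherical analogue: there is no global convex function on $S^2$, harmonic maps into $S^2$ are ill-behaved, and a vertex-balanced placement need not be an embedding. One natural attack is to delete a triangle (or a vertex together with its star) to reduce to a disk-with-fixed-convex-boundary problem where Theorem 1.3 applies, and then to show that the dependence on the deleted data is governed by $SO(3)$, so that re-gluing produces a deformation retraction of $\mathcal{G}$ onto an $SO(3)$-orbit. Making this construction canonical and $SO(3)$-equivariant is the crux.

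In summary, the negative-curvature case is settled by Theorem 1.1, the flat case should follow from a routine adaptation of classical discrete harmonic map theory together with an equivariance normalization, and the spherical case genuinely requires a new idea, since the convexity that powers the generalized Tutte embedding theorem is unavailable on $S^2$. I therefore expect the full resolution of the conjecture to hinge on finding a robust replacement for the convexity argument in the positive curvature setting.
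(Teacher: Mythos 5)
The statement you are proving is Conjecture 1.3, and it is not proved in the paper: the paper explicitly affirms it only ``in the case of hyperbolic surfaces,'' via Theorem \ref{main}. Your text is likewise a programme rather than a proof. The only case you actually establish is the negative-curvature one, and there your route is exactly the paper's: the group of isometries homotopic to the identity is trivial for a closed hyperbolic surface, so the conjecture reduces to contractibility of $X(M,T,\psi)$, which follows from the maps $\Phi$ (weights $\mapsto$ balanced geodesic embedding, Theorems \ref{existence} and \ref{embedding}) and $\Psi$ (mean value coordinates) with $\Phi\circ\Psi=id_X$ and a straight-line contraction in $\mathbb R^{\vec E}_{>0}$. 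For the remaining two cases of the conjecture you offer no proof: the flat case is asserted to be ``a routine adaptation,'' and the spherical case you concede requires a new idea. So, judged as a proof of the stated conjecture, the proposal has a genuine gap; judged as a treatment of the hyperbolic case, it coincides with the paper.

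Concretely, the flat case is not routine from this paper's machinery. On the torus, uniqueness of a $w$-balanced mapping fails outright (the translation group acts on solutions), so the uniqueness argument via the discrete maximum principle and the equality analysis in Lemma \ref{CAT} breaks down: in the universal cover $\mathbb R^2$ the comparison inequalities of Lemma \ref{CAT} are equalities for every triangle, degenerate or not, which is precisely what the proof uses curvature $\leq -1$ to exclude. The properness estimate (Lemma \ref{keylemma} and the constants in Lemma \ref{steps}) is proved with hyperbolic trigonometry in a CAT($-1$) space and also fails in curvature zero, where the noncompact translation orbit of any balanced map destroys the compactness statement of Theorem \ref{proper}; one must first quotient by translations, prove uniqueness modulo translation, and verify $T^2$-equivariance of the resulting retraction --- none of which is supplied. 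The spherical case is, as you say, genuinely open by these methods, and your suggested reduction to Theorem 1.2 by deleting a star leaves unresolved exactly the equivariance and re-gluing issues you name. As it stands, your argument proves the same statement the paper proves (the hyperbolic case) and nothing beyond it.
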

The connectivity of these spaces has been explored in \cite{cairns1944isotopic, chambers2021morph, hass2012simplicial}. Awartani-Henderson \cite{awartani1987spaces} identified a contractible subspace in the space of geodesic triangulations of the 2-sphere. Hass-Scott \cite{hass2012simplicial} showed that the space of geodesic triangulation of a surface with a hyperbolic metric is contractible if the triangulation contains only one vertex.
The main result of this paper affirms conjecture 1.3 in the case of hyperbolic surfaces.

\subsection{Set Up and the Main Theorem}
Assume $M$ is a connected closed orientable smooth surface with a smooth Riemannian metric $g$ of non-positive Gaussian curvature.  A topological triangulation of $M$ can be identified as a  homeomorphism $\psi$ from $|T|$ to $M$, where $|T|$ is the carrier of a 2-dimensional simplicial complex $T=(V,E,F)$ with the vertex set $V$, the edge set $E$, and the face set $F$. 
For convenience, we label the vertices as $1,2,...,n$ where $n=|V|$ is the number of vertices.
The edge in $E$ determined by vertices $i$ and $j$ is denoted as $ij$. Each edge is identified with the closed unit interval $[0,1]$.

Let $T^{(1)}$ be the 1-skeleton of $T$, and denote $X=X(M,T,\psi)$ as the space of geodesic triangulations homotopic to $\psi|_{T^{(1)}}$. More specifically, $X$ contains all the embeddings $\varphi:T^{(1)}\rightarrow M$ satisfying that
\begin{enumerate}
	\item The restriction $\varphi_{ij}$ of $\varphi$ on the edge $ij$ is a geodesic parameterized with constant speed, and
	\item $\varphi$ is homotopic to $\psi|_{T^{(1)}}$.
\end{enumerate}

It has been proved by Colin de Verdi{\`e}re \cite{de1991comment} that such $X(M, T,\psi)$ is always non-empty. Further, $X$ is naturally a metric space, with the distance function
$$
d_X(\varphi,\phi)=\max_{x}d_{g}(\varphi(x),\phi(x)).
$$
Then our main theorem is formally stated as follows.
\begin{theorem}
\label{main}
If $(M,g)$ has strictly negative Gaussian curvature, then $X(M,T,\psi)$ is contractible. In particular, it is connected.
\end{theorem}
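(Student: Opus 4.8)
The plan is to derive Theorem~\ref{main} from a \emph{generalized Tutte embedding theorem} for the negatively curved metric $g$, combined with a convexity/gradient-flow argument for an auxiliary energy. I would first pass to the universal cover $\pi:\tilde M\to M$; since $g$ has nonpositive curvature, $\tilde M$ is a Hadamard surface diffeomorphic to $\mathbb R^2$, on which $d_g^2$ has positive-definite Hessian, hence is strictly convex along geodesics. Fixing once and for all a lift $\tilde i\in\tilde V$ of each vertex $i$, a point of $X$ is encoded by a $\pi_1(M)$-equivariant map $\tilde\varphi$ on the lifted vertex set, hence by the tuple $(\tilde\varphi(\tilde 1),\dots,\tilde\varphi(\tilde n))\in\tilde M^{\,n}\cong\mathbb R^{2n}$; write $\hat X\cong\tilde M^{\,n}$ for the resulting convex (hence contractible) space of all equivariant vertex configurations carrying the homotopy class of $\psi$, and $X\subset\hat X$ for the open subset of those that are embeddings. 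For $w=(w_{ij})\in W:=\mathbb R^{E}_{>0}$ set
\begin{equation*}
E_w(\tilde\varphi)\;=\;\sum_{ij\in E} w_{ij}\, d_g\!\big(\tilde\varphi(\tilde i),\,\gamma_{ij}\,\tilde\varphi(\tilde j)\big)^2,
\end{equation*}
where $\gamma_{ij}\in\pi_1(M)$ is the deck transformation recorded by a fixed lift of the edge $ij$, so that the summand is the squared length of the corresponding edge in $M$. Strict convexity of $d_g^2$ makes $E_w$ strictly convex on $\hat X$, and cocompactness of the $\pi_1$-action together with strict negativity of the curvature (which rules out the flat directions that would wreck properness for a Euclidean metric) makes $E_w$ proper; hence $E_w$ has a unique critical point $\varphi_w\in\hat X$, its global minimum, characterized by the balancing condition that each $\varphi_w(i)$ is the $w$-weighted center of mass of its neighbors.

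The crux, and the expected main obstacle, is the generalized Tutte theorem: \emph{$\varphi_w\in X$}, i.e.\ the balanced configuration is an embedded geodesic triangulation in the class of $\psi$. I would extend $\varphi_w$ geodesically across each $2$-simplex to a piecewise-geodesic simplicial map $\hat\varphi_w:M\to M$, equivariantly a $\pi_1(M)$-map $\tilde M\to\tilde M$, and prove it is a local homeomorphism. Local injectivity on open faces and across edges reduces, using nonpositive curvature, to nondegeneracy of the relevant geodesic triangles; local injectivity around a vertex is where the balancing condition enters, via an adaptation of Tutte's no-foldover argument — a vertex in the relative interior of the geodesic convex hull of its cyclically ordered link, on a nonpositively curved surface, cannot have its star wrap around — and this curved no-winding analysis, with its interplay of triangle geometry and angle conditions, is the delicate technical point. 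Once $\hat\varphi_w$ is a local homeomorphism it is proper (by proper discontinuity of the deck action plus cocompactness), hence a covering of the simply connected $\tilde M$, hence a homeomorphism; so $\hat\varphi_w$ descends to a homeomorphism of $M$ and $\varphi_w|_{T^{(1)}}$ is an embedding homotopic to $\psi|_{T^{(1)}}$, i.e.\ $\varphi_w\in X$. A standard implicit-function/variational argument then gives that $w\mapsto\varphi_w$ is continuous, and in particular $\varphi_{\mathbf 1}\in X$ for the all-ones weight $\mathbf 1$, so $X\neq\emptyset$.

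Finally I would prove contractibility of $X$ by flowing along $-\nabla E_{\mathbf 1}$. Since $E_{\mathbf 1}$ is smooth, strictly convex, and proper on $\hat X\cong\mathbb R^{2n}$ with unique critical point $\varphi_{\mathbf 1}$, its negative gradient flow is globally defined, depends continuously on the initial configuration, and converges to $\varphi_{\mathbf 1}$; reparametrizing time to $[0,1)$ and setting the value at $t=1$ to $\varphi_{\mathbf 1}$ yields a continuous $H:\hat X\times[0,1]\to\hat X$ with $H(\cdot,0)=\mathrm{id}$, $H(\cdot,1)\equiv\varphi_{\mathbf 1}$, and $H(\varphi_{\mathbf 1},\cdot)\equiv\varphi_{\mathbf 1}$. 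The remaining point is that $H$ preserves $X$: along the flow $-\nabla_{\tilde\varphi(\tilde i)}E_{\mathbf 1}$ is a positive combination of the initial vectors pointing from $\tilde\varphi(\tilde i)$ to its neighbors, so each vertex is pushed toward the barycenter of its neighbors, and I expect that a compactness/degree argument in the same circle of ideas as the generalized Tutte theorem shows the open set $X$ is forward invariant (the flow cannot reach a degenerate configuration in $\partial X$). Granting this, $H$ restricts to a deformation retraction of $X$ onto $\{\varphi_{\mathbf 1}\}$, so $X=X(M,T,\psi)$ is contractible and in particular connected. The two genuinely hard steps are thus the curved no-foldover argument at vertices and the forward invariance of $X$ under the flow, which are two manifestations of the same Tutte-type phenomenon.
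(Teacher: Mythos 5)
There is a genuine gap at the final and decisive step: the forward invariance of $X$ under the negative gradient flow of $E_{\mathbf 1}$. Your whole contractibility argument rests on the claim that the flow starting at an arbitrary embedded geodesic triangulation never reaches a degenerate or folded configuration, but you only say you ``expect'' this from a compactness/degree argument. Nothing in the Tutte-type theorem gives this: that theorem controls only the \emph{critical} (balanced) configuration, while the intermediate configurations along the flow are not balanced for any weight, and decreasing energy is perfectly compatible with passing through non-embedded configurations (the sublevel sets of $E_{\mathbf 1}$ are not contained in $X$, and there is no maximum principle protecting embeddedness pointwise in time). This is not a technicality one can wave at; it is exactly the kind of statement whose difficulty motivated the paper's different route. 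The paper never flows at all: it proves existence, uniqueness and continuity of the $w$-balanced mapping $\Phi(w)$ for \emph{arbitrary non-symmetric} weights $w\in\mathbb R^{\vec E}_{>0}$, and then uses Floater's mean value coordinates to build a continuous map $\Psi:X\to\mathbb R^{\vec E}_{>0}$ with $\Phi\circ\Psi=\mathrm{id}_X$ (every embedded geodesic triangulation is balanced with respect to its own mean value weights). Contractibility of $X$ then follows formally from contractibility of the weight space. Your symmetric variational framework cannot produce such a section $\Psi$: a given embedded triangulation is in general not the minimizer of $E_w$ for any symmetric positive $w$, which is precisely why the paper generalizes the embedding theorem from symmetric to non-symmetric weights.

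Two smaller points. First, your claims of strict convexity and properness of $E_w$ on $\tilde M^n$ need real proofs: on a Hadamard surface $d^2$ is only jointly convex, and strictness fails when configurations slide along a common geodesic, which is exactly the degenerate case the paper must exclude in its uniqueness argument by a degree/surjectivity contradiction; properness likewise requires using that the edge system carries all of $\pi_1(M)$. Second, the existence-and-embedding part of your plan for symmetric weights is essentially the known results of Colin de Verdi\`ere and Hass--Scott, so even granting it, the new content of Theorem~\ref{main} (the contraction of all of $X$) is exactly the part your proposal leaves unproven.
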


\subsection{Generalized Tutte's Embedding}

Let $\tilde X=\tilde X (M,T,\psi)$ be the super space of $X$, containing all the continuous maps $\varphi:T^{(1)}\rightarrow M$ satisfying that
\begin{enumerate}
	\item The restriction $\varphi_{ij}$ of $\varphi$ on the edge $ij$ is geodesic parameterized with constant speed, and
	\item $\varphi$ is homotopic to $\psi|_{T^{(1)}}$.
\end{enumerate}
Notice that elements in $\tilde X$ may not be embeddings of $T^{(1)}$ to $M$. The space $\tilde{X}$ is also naturally a metric space, with the same distance function
$$
d_{\tilde X}(\varphi,\phi)=\max_{x}d_{g}(\varphi(x),\phi(x)).
$$
We call an element in $\tilde{X}$ a \textit{geodesic mapping}. A geodesic mapping is determined by the positions $q_i = \varphi(i)$ of the vertices and the homotopy classes of $\varphi_{ij}$ relative to the endpoints $q_i$ and $q_j$. In particular, this holds for geodesic triangulations. Since we can perturb the vertices of a geodesic triangulation to generate another, $X$ is a $2n$ dimensional manifold.

Let $(i, j)$ be the directed edge starting from the vertex $i$ and ending at the vertex $j$. Denote
$
\vec E=\{(i,j):ij\in E\}
$
as the set of directed edges of $T$. A positive vector $w\in\mathbb R^{\vec E}_{>0}$ is called a \emph{weight} of $T$. For any weight $w$ and geodesic mapping $\varphi\in \tilde X$, we say $\varphi$ is \textit{$w$-balanced} if for any $i\in V$,
$$
\sum_{j:ij\in E}w_{ij}{v}_{ij}=0.
$$
Here $ v_{ij}\in T_{q_i}M$ is defined with the exponential map $\exp:TM\to M$ such that $\exp_{q_i}(t{v}_{ij}) = \varphi_{ij}(t)$ for $t\in[0, 1]$.

The main part of the proof of Theorem \ref{main} is to generalize  Tutte's embedding theorem (see Theorem 9.2 in \cite{tutte1963draw} or Theorem 6.1 in \cite{floater2003one}) to closed surfaces of negative curvature. Specifically, we will prove
the following two theorems.
\begin{theorem}
\label{existence}
Assume $(M,g)$ has strictly negative Gaussian curvature. For any weight $w$, there exists a unique geodesic mapping $\varphi\in \tilde X(M, T,\psi)$ that is $w$-balanced. Such induced map $\Phi(w)=\varphi$ is continuous from $\mathbb R^{\vec E}_{>0}$ to $\tilde X$.
\end{theorem}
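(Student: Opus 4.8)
The plan is to prove existence and uniqueness together, by exhibiting $\Phi$ as the inverse of a global diffeomorphism obtained from a covering‑space argument once the appropriate non‑degeneracy and compactness are in place.

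First I would pass to the universal cover $\pi:\tilde M\to M$, a Hadamard manifold which, since $M$ is closed with $K<0$, is $\mathrm{CAT}(-a^2)$ for some $a>0$ and in particular Gromov hyperbolic; $\Gamma=\pi_1(M)$ acts by the holonomy isometries, and $\Gamma$ is non‑elementary with trivial center. Each $\varphi\in\tilde X$ has a canonical equivariant lift of the $1$‑skeleton of $\tilde T$ into $\tilde M$ — canonical precisely because $\Gamma$ is centerless — so recording the positions $q=(q_1,\dots,q_n)\in\tilde M^n$ of one vertex in each $\Gamma$‑orbit identifies $\tilde X$ with the Hadamard manifold $\tilde M^n$, the edge $ij$ being carried to the geodesic from $q_i$ to $\rho(\gamma_{ij})q_j$ for fixed combinatorial data $\gamma_{ij}\in\Gamma$. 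Under this identification $v_{ij}=\exp_{q_i}^{-1}(\rho(\gamma_{ij})q_j)$, and the $w$‑balanced equation at $i$ asserts exactly that $q_i$ is the weighted Riemannian barycenter of $\{\rho(\gamma_{ij})q_j\}_{j\sim i}$ with weights $w_{ij}$.

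Next I would form the smooth map $\mathcal E:\tilde X\times\mathbb R^{\vec E}_{>0}\to T\tilde X$, $\mathcal E(\varphi,w)=\bigl(\sum_{j\sim i}w_{ij}v_{ij}\bigr)_i$, set $S:=\mathcal E^{-1}(0)$, and study the projection $p:S\to\mathbb R^{\vec E}_{>0}$. Three ingredients are needed. \emph{(A) Non‑degeneracy.} At every $(\varphi,w)\in S$ the partial differential $D_\varphi\mathcal E$ is invertible: one has $-\partial v_{ij}/\partial q_i=\mathrm{Hess}_{q_i}\tfrac12 d(\cdot,\rho(\gamma_{ij})q_j)^2\succeq\mathrm{Id}$, and by the Hessian comparison theorem the eigenvalue $1$ occurs only in the radial direction (here $K<0$ strictly is used), while $\lVert\partial v_{ij}/\partial q_j\rVert\le 1$ since $\exp_{q_i}$ is $1$‑Lipschitz‑expanding; a vector $\xi\in\ker D_\varphi\mathcal E$ then obeys $(\sum_j w_{ij})\lVert\xi_i\rVert\le\sum_j w_{ij}\lVert\xi_j\rVert$ at every $i$, so $\lVert\xi\rVert$ is constant over vertices and, chasing the equality cases, $\xi_i$ is parallel to every edge direction at each $i$; equivariance would then push the image of the mapping into a single geodesic line preserved by $\rho(\Gamma)$, impossible for a non‑elementary $\Gamma$, whence $\xi\equiv0$. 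Thus $0$ is a regular value, $S$ is a smooth $\lvert\vec E\rvert$‑manifold, and $p$ is a local diffeomorphism. \emph{(B) Properness.} $p$ is proper: for $w$ in a compact subset of $\mathbb R^{\vec E}_{>0}$ every $w$‑balanced mapping lies in a fixed compact subset of $\tilde X$. \emph{(C) The symmetric seed.} For $w\equiv1$ the balanced mappings are exactly the critical points of $E(\varphi)=\sum_{ij\in E}\ell_{ij}(\varphi)^2=\sum_{ij}d(q_i,\rho(\gamma_{ij})q_j)^2$; this is convex on $\tilde X$ because $d^2$ is convex on $\tilde M\times\tilde M$, proper by (B), and strictly convex, since along any geodesic on which $E$ were affine each $\ell_{ij}$ would be forced constant, hence (two geodesics of a $\mathrm{CAT}(-a^2)$ space at constant distance coincide) adjacent lifted vertices would coincide and every edge would degenerate to a constant map, contradicting surjectivity of $\psi_*$ on $\pi_1$. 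So $E$ has a unique critical point: there is exactly one $1$‑balanced mapping.

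Granting (A)–(C) the theorem follows formally: $p:S\to\mathbb R^{\vec E}_{>0}$ is a proper local diffeomorphism with non‑empty image onto a connected, simply connected manifold, hence a (trivial) covering map whose sheet number equals the number of $1$‑balanced mappings, namely $1$; so $p$ is a diffeomorphism, $\Phi=p^{-1}$ selects a unique $w$‑balanced $\varphi$ for each $w$, and $\Phi$ is smooth, in particular continuous. The step I expect to be hardest is (B), the a priori bound, and it is there — besides the Hessian comparison in (A) — that the strict negativity of the curvature is genuinely used: one must exclude a single edge from becoming arbitrarily long, for which the barycenter condition at the vertex carrying the longest edge propagates the defect into a long almost‑geodesic path through the triangulation that cannot be realised equivariantly in $\tilde M$; and one must exclude the whole equivariant configuration from drifting off to infinity with bounded edge lengths, which is impossible because the holonomy of any essential loop of $T^{(1)}$ would then have to fix the limiting point of $\partial\tilde M$, contradicting that $\Gamma$ is non‑elementary.
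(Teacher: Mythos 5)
Your architecture (realize $\tilde X\cong\tilde M^n$, view the balance equation as a section $\mathcal E$, show its zero set projects to the weight space as a proper local diffeomorphism, and pull uniqueness across from the symmetric seed $w\equiv1$ by a covering argument) is genuinely different from the paper's, which proves uniqueness by a CAT(0) maximum principle, existence by a degree-type fixed-point lemma on a large ball in $\tilde M^n$, and continuity by soft compactness. Your route would in fact give more (smooth dependence of $\Phi$ on $w$), but it stands or falls on exactly the ingredient you leave unproved: step (B). Properness of $p$ -- an a priori bound confining all $w$-balanced (indeed all ``outward-residue'') configurations to a compact set depending only on $\lambda_w$ -- is the technical heart of the paper (its Lemma \ref{keylemma}, proved via the eight hyperbolic-trigonometric estimates of Lemma \ref{steps}, which propagate a long edge into a chain of vertices whose distances to a basepoint strictly increase, contradicting finiteness of $V$). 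Your two-sentence heuristic (``the barycenter condition propagates the defect into a long almost-geodesic path that cannot be realised equivariantly'') is not an argument, and the second half (escape to infinity with bounded edge lengths forces an essential holonomy to fix a boundary point) is only the easy half of the problem. Without (B) the covering argument, and hence existence, uniqueness and continuity for general $w$, does not get off the ground.

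There are also gaps inside the steps you do sketch. In (A), the equality-case analysis fails at degenerate edges: if $q_i=A_{ij}q_j$ then $\mathrm{Hess}_{q_i}\tfrac12 d(\cdot,A_{ij}q_j)^2$ is exactly the identity in \emph{every} direction, so a kernel vector is not forced to be radial there, and even when each one-ring collapses onto a geodesic you cannot conclude that the whole image is a single $\Gamma$-invariant geodesic line -- a priori it is only a finite union of geodesic arcs. The paper closes this case differently: the degenerate map extends to a map of $|T|$ homotopic to $\psi$, hence of degree one and surjective, contradicting that its image has measure zero; you would need that (or an equivalent) argument both in (A) and in your uniqueness discussion. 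Similarly, in (C) the strict-convexity equality case is misstated: constant distance between two nonconstant geodesics yields a flat strip, and the correct contradiction is an invariant geodesic line versus non-elementarity of $\Gamma$, not ``adjacent lifted vertices coincide''; alternatively the symmetric seed can simply be quoted from Colin de Verdi\`ere or Hass--Scott, as the paper notes. With (B) supplied in full and the equality cases repaired, your continuation scheme would be a valid and even stronger alternative; as written, the essential quantitative lemma is missing.
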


\begin{theorem}
\label{embedding}
If $\varphi\in\tilde X$ is $w$-balanced for some weight $w$, then $\varphi\in X$.
\end{theorem}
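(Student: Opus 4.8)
The plan is to lift to the universal cover, use convexity of distance functions there to reduce the theorem to a purely local statement about the star of each vertex, and then close the argument with a degree count. Let $\pi\colon\widetilde M\to M$ be the universal covering. Since $M$ is closed and strictly negatively curved, $\widetilde M$ is a Hadamard surface with sectional curvature $\le-\kappa<0$; in particular it is uniquely geodesic and a $\mathrm{CAT}(0)$ space, so $d(\cdot,p)$ is convex and $d(\cdot,p)^2$ is strictly convex for every $p\in\widetilde M$. Lift $T$ and $\varphi$ to the $\pi_1(M)$-equivariant data $\widetilde T$ and $\widetilde\varphi\colon\widetilde T^{(1)}\to\widetilde M$; here $\varphi\simeq\psi|_{T^{(1)}}$ guarantees that $\widetilde\varphi$ is equivariant for the isometric $\pi_1(M)$-action on $\widetilde M$ determined by $\psi$, and equivariantly homotopic to the lift $\widetilde\psi$ of the homeomorphism $\psi$. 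Because $\widetilde M$ is uniquely geodesic, the lift of $\varphi_{ij}$ is the geodesic between the corresponding lifts $\widetilde q_i,\widetilde q_j$, so $v_{ij}=\log_{\widetilde q_i}\widetilde q_j$, and the $w$-balanced equation at $i$ says exactly that $\widetilde q_i$ is the unique critical point---hence the minimum---of the strictly convex function $x\mapsto\sum_{j\sim i}w_{ij}\,d(x,\widetilde q_j)^2$, i.e.\ $\widetilde q_i$ is the weighted Fr\'echet mean of its neighbors with weights $\lambda_{ij}=w_{ij}/\sum_k w_{ik}$.

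I claim the theorem reduces to the following \emph{local lemma}: every face $ijk$ of $\widetilde T$ is mapped by $\widetilde\varphi$ onto a non-degenerate geodesic triangle $\widetilde q_i\widetilde q_j\widetilde q_k$, and these triangles are coherently (say positively) oriented. Granting it, each such triangle bounds a unique embedded disk in $\widetilde M$, so $\widetilde\varphi$ extends equivariantly over $|\widetilde T|$ and hence $\varphi$ extends to a map $|T|\to M$ (the boundary of each $2$-cell is null-homotopic, being homotopic to $\psi$ of that boundary); this extended $\varphi$ is an orientation-preserving branched covering of the closed surface $M$---a local homeomorphism away from vertices, with local model $z\mapsto z^{k_i}$ at vertex $i$ for some integer $k_i\ge 1$ (the angles at $\widetilde q_i$ of the fan of triangles there sum to $2\pi k_i$). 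The homotopy $\varphi\simeq\psi$ on $T^{(1)}$ extends over the $2$-cells because $\pi_2(M)=0$, so $\varphi$ is homotopic to $\psi$ as a map $|T|\to M$ and therefore has degree $1$. But an orientation-preserving degree-$1$ branched covering of a closed surface has no branch points (near a branch point of local degree $k$ every nearby value has at least $k$ preimages counted with $+$ sign; equivalently $\sum_i(k_i-1)=0$ by Riemann--Hurwitz), so it is an honest covering of degree $1$, hence a homeomorphism. In particular $\varphi|_{T^{(1)}}$ is injective, hence an embedding (its domain is compact), with geodesic edges in the homotopy class of $\psi|_{T^{(1)}}$: that is, $\varphi\in X$.

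It remains to prove the local lemma, and this is the main obstacle. Coherent orientation follows once non-degeneracy is known (two triangles at a common edge must lie on opposite sides, which propagates), so the crux is \emph{non-degeneracy of faces}: no triple $\widetilde q_i,\widetilde q_j,\widetilde q_k$ is collinear (in particular no two adjacent vertices coincide). The tool is a maximum principle coming from convexity: for any convex function $f$ on $\widetilde M$---distance to a complete geodesic, a Busemann function, or the strictly convex $d(\cdot,p)^2$---Jensen's inequality for barycenters in $\mathrm{CAT}(0)$ spaces gives $f(\widetilde q_i)\le\sum_{j\sim i}\lambda_{ij}f(\widetilde q_j)$, so $f\circ\widetilde\varphi$ has no strict local maximum on the vertex set, with rigidity forcing neighbors to coincide when $f$ is strictly convex and a local maximum is attained. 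Suppose a face is degenerate, lying on a complete geodesic $\gamma$, and let $A\subseteq V(\widetilde T)$ be the set of vertices mapped into $\gamma$; then $A\ne\varnothing$, and $A\ne V(\widetilde T)$ because $\widetilde\varphi$ is equivariant and the $\pi_1(M)$-orbit of a point is coarsely dense in $\widetilde M$, hence not contained in the thin set $\gamma$. One then wants a contradiction by analysing the balanced equations along the frontier of $A$---where, since adjacent vertices in $\gamma$ contribute vectors tangent to $\gamma$, the remaining neighbors must contribute vectors whose components transverse to $\gamma$ cancel---and feeding this, together with the maximum principle for $d(\cdot,\gamma)$ on the two sides of $\gamma$, back into the combinatorics of the triangulation. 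Making this last step rigorous is delicate precisely because $d(\cdot,\gamma)$ and Busemann functions are convex but not strictly convex, so the rigidity in Jensen's inequality is weaker; this is where strictly negative (rather than merely nonpositive) curvature and the global hypothesis $\varphi\simeq\psi$ are essential.
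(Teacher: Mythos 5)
Your reformulation of the balanced condition as a weighted Fr\'echet--mean property, and your endgame (extend over the $2$-cells, get a degree-one orientation-preserving branched covering, kill the branch points by Riemann--Hurwitz) are sound in outline. But the proposal has a genuine gap exactly where the real work of the theorem lies: the ``local lemma'' is not proved. You reduce everything to the statement that no face degenerates onto a geodesic, sketch an attack via Jensen-type maximum principles for $d(\cdot,p)^2$, $d(\cdot,\gamma)$ and Busemann functions, and then concede that the rigidity available for the non-strictly-convex functions is too weak to close the argument. That concession is the whole difficulty: the set of vertices mapping into a fixed complete geodesic has a frontier along which the balanced equations only give cancellation of transverse components, and nothing in your sketch converts this into a contradiction. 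The paper does not exclude degenerate faces by convexity at all. It assigns \emph{admissible direction fields} so that even zero-length edges and collapsed triangles get well-defined inner angles, proves via the balanced condition that the angle sum at every vertex is at least $2\pi$ and via Gauss--Bonnet (with $K<0$ and surjectivity/degree one) that it is exactly $2\pi$ (Lemma \ref{star}), shows that collapsed vertex classes force their whole one-ring to lie on a geodesic (Lemmas \ref{prescribe}, \ref{flatstar}, \ref{degenerate}), and then runs an Euler-characteristic count on the subcomplex $\tilde\Omega$ of degenerate triangles: boundary vertices of $\tilde\Omega$ carry angle $0$, interior vertices carry $2\pi$, which forces $|E'_B|\le 2$, contradicting simpliciality of $T$. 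So the degenerate case is eliminated by a combinatorial Gauss--Bonnet argument, not by the analytic maximum principle you propose, and there is no indication your route can be completed without essentially reinventing something of this kind.

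A secondary, but real, gap: your parenthetical claim that once faces are non-degenerate, ``two triangles at a common edge must lie on opposite sides, which propagates'' is false for general geodesic maps --- folded configurations with all faces non-degenerate exist, and nothing about propagation rules them out. Excluding folds again requires the balanced condition: in the paper this is precisely the content of Lemma \ref{star} (directions at a vertex cannot lie in an open half-plane, so the angle sum is $\ge 2\pi$, and Gauss--Bonnet forces equality, hence no overlapping of adjacent stars). Your branched-covering structure, with local model $z\mapsto z^{k_i}$, silently assumes this; as written it is unsupported.
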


Theorem \ref{embedding} can be regarded as a generalization of the embedding theorems by Colin de Verdi{\`e}re (see Theorem 2 in \cite{de1991comment}) and Hass-Scott (see Lemma 10.12 in \cite{hass2012simplicial}), which imply that the minimizer of the following discrete Dirichlet energy
$$E(\varphi) = \frac{1}{2}\sum_{ij\in E}w_{ij}l^2_{ij}$$
among the maps $\varphi$ in the homotopy class of $\psi|_{T^{(1)}}$ is a geodesic triangulation. Here $l_{ij}$ is the geodesic length of $\varphi_{ij}$ in $M$. The minimizer is a $w$-balanced geodesic mapping with $w_{ij} = w_{ji}$ for $ij\in E$. Hence, Theorem \ref{embedding} extends the previous results from the cases of symmetric weights to non-symmetric weights.
We believe that, the proofs in Colin de Verdi{\`e}re \cite{de1991comment} and Hass-Scott \cite{hass2012simplicial} could be easily modified to work with our non-symmetric case. Nevertheless, we will give a new proof in Section 3 to make the paper self-contained.

\subsection{Mean Value Coordinates and the Proof of Theorem \ref{main}}

Theorem \ref{existence} and \ref{embedding} give a continuous map $\Phi$ from $\mathbb R^{\vec E}_{>0}$ to $X$.
For the oppositie direction, we can construct a weight $w$ for a geodesic embedding $\varphi\in X$, using \emph{mean value coordinates} which was firstly introduced by Floater \cite{floater2003mean}. Given $\varphi\in X$, the mean value coordinates are defined to be
$$
w_{ij}=\frac{\tan(\alpha_{ij}/2)+\tan(\beta_{ij}/2)}{| v_{ij}|},
$$
where $| v_{ij}|$ equals to the geodesic length of $\varphi_{ij}([0,1])$, and $\alpha_{ij}$ and $\beta_{ij}$ are the two inner angles in $\varphi(T^{(1)})$ at the vertex $\varphi(i)$ sharing the edge $\varphi_{ij}([0,1])$. The construction of mean value coordinates gives a continuous map $\Psi$ from $X$ to $\mathbb R^{\vec E}_{>0}$. Further,
by Floater's mean value theorem (see Proposition 1 in \cite{floater2003mean}),
any $\varphi\in X$ is $\Psi(\varphi)$-balanced.
Namely, $\Phi\circ \Psi=id_X$. Then Theorem \ref{main} is a direct consequence of Theorem \ref{existence} and \ref{embedding}.

\begin{proof}[Proof of Theorem \ref{main}]
Since $\mathbb R^{\vec E}_{>0}$ is contractible, $\Psi\circ\Phi$ is homotopic to the identity map. Since $\Phi\circ \Psi=id_X$, $X$ is homotopy equivalent to the contractible space $\mathbb R^{\vec E}_{>0}$.
\end{proof}
In the remaining of the paper, we will prove Theorem \ref{existence} in Section 2 and Theorem \ref{embedding} in section 3.

\section{Proof of Theorem \ref{existence}}
Theorem \ref{existence} consists of three parts: the existence of $w$-balanced geodesic mapping, the uniqueness of $w$-balanced geodesic mapping and the continuity of the map $\Phi$.
In this section, we will first parametrize $\tilde X$ by $\tilde M^n$, where $\tilde M$ is the universal covering of $M$, and then prove the three parts in Subsection 2.1 and 2.2 and 2.3 respectively.

Assume that $p$ is the covering map from $\tilde M$ to $M$, and $\Gamma$ is the corresponding group of deck transformations of the covering so that $\tilde{M}/\Gamma = M$. For any $i\in V$, fix a lifting $\tilde q_i\in\tilde M$ of $q_i\in M$. For any edge $ij$, denote $\tilde \varphi_{ij}(t)$ as the lifting of $\varphi_{ij}(t)$ such that $\tilde \varphi_{ij}(0) =  \tilde q_i$. Then $p(\tilde \varphi_{ij}(1)) = \varphi_{ij}(1)=q_j=p(\tilde q_j)$, and there exists a unique deck transformation $A_{ij}\in\Gamma$ such that $\tilde \varphi_{ij}(1) = A_{ij} \tilde q_j$. It is easy to see that $A_{ij}=A_{ji}^{-1}$ for any edge $ij$.

Equip $\tilde M$ with the natural pullback Riemannian metric $\tilde g$ of $g$ with negative Gaussian curvature. This metric is equivariant with respect to $\Gamma$.  For any $ x, y\in\tilde M$, there exists a unique geodesic with constant speed parameterization $\gamma_{x,y}:[0,1]\rightarrow\tilde M$ such that $\gamma_{x,y}(0)=x$ and $\gamma_{x,y}(1)=y$. We can naturally parametrize $\tilde X$ as follows.

\begin{theorem}
For any $( x_1,..., x_n)\in\tilde M^n$, define $\varphi=\varphi[x_1,...,x_n]$ as
$$
\varphi_{ij}(t)= p\circ\gamma_{x_i,A_{ij}x_j}(t)
$$
for any $ij\in E$ and $t\in[0,1]$.
Then such $\varphi$ is a well-defined geodesic mapping in $\tilde{X}$, and the map $(x_1,...,x_n)\mapsto\varphi[x_1,...,x_n]$ is a homeomorphism from  $\tilde M^n$ to $\tilde X$.
\end{theorem}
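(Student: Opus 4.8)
The plan is to check directly that $\mathcal F\colon(x_1,\dots,x_n)\mapsto\varphi[x_1,\dots,x_n]$ is well defined, that it is a continuous bijection, and that its inverse is continuous; the last of these is the only delicate point. For \emph{well-definedness}: since the covering $p$ is a local isometry between $\tilde g$ and $g$, each $\varphi_{ij}=p\circ\gamma_{x_i,A_{ij}x_j}$ is a constant-speed geodesic of $M$ from $p(x_i)$ to $p(A_{ij}x_j)=p(x_j)$, and the two parametrizations of an unoriented edge agree because $A_{ji}=A_{ij}^{-1}$ and $\tilde g$ is $\Gamma$-equivariant, so that $\gamma_{x_j,A_{ji}x_i}(1-t)=A_{ji}\,\gamma_{x_i,A_{ij}x_j}(t)$ has the same image under $p$ as $\varphi_{ij}(t)$. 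To see that $\varphi[x_1,\dots,x_n]$ lies in the homotopy class of $\psi|_{T^{(1)}}$, I would first check this at $(\tilde q_1,\dots,\tilde q_n)$: there $\gamma_{\tilde q_i,A_{ij}\tilde q_j}$ has the same endpoints $\tilde q_i$ and $A_{ij}\tilde q_j$ as the lift $\tilde\psi_{ij}$ (by the definition of $A_{ij}$), hence is homotopic to it rel endpoints in the simply connected space $\tilde M$; the general case then follows by joining $(x_i)$ to $(\tilde q_i)$ by a path in the connected space $\tilde M^n$, which yields a homotopy of geodesic mappings.

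\emph{Continuity of $\mathcal F$.} Because $\tilde M$ is a Cartan--Hadamard manifold, $t\mapsto d_{\tilde g}(\gamma_{a,b}(t),\gamma_{a',b'}(t))$ is convex, hence bounded by $\max\{d_{\tilde g}(a,a'),d_{\tilde g}(b,b')\}$; since $p$ is $1$-Lipschitz and $\vec E$ is finite, this makes $\mathcal F$ $1$-Lipschitz for the metric $d_{\tilde M^n}(x,x')=\max_i d_{\tilde g}(x_i,x_i')$. \emph{Surjectivity.} Given $\varphi\in\tilde X$, I would pick a homotopy $H\colon T^{(1)}\times[0,1]\to M$ from $\psi|_{T^{(1)}}$ to $\varphi$ and let $x_i$ be the endpoint of the lift of the path $H(i,\cdot)$ starting at $\tilde q_i$. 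For each edge $ij$, lift the square $H_{ij}\colon[0,1]^2\to M$ to $\tilde H_{ij}$ with $\tilde H_{ij}(\cdot,0)=\tilde\psi_{ij}$; then $\tilde H_{ij}(0,\cdot)$ and $\tilde H_{ij}(1,\cdot)$ are the lifts of $H(i,\cdot)$ and $H(j,\cdot)$ starting at $\tilde q_i$ and at $A_{ij}\tilde q_j$, so they end at $x_i$ and at $A_{ij}x_j$; hence $\tilde H_{ij}(\cdot,1)$ is the constant-speed geodesic $\gamma_{x_i,A_{ij}x_j}$, giving $\varphi_{ij}=p\circ\gamma_{x_i,A_{ij}x_j}$ and $\varphi=\mathcal F(x_1,\dots,x_n)$.

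\emph{Injectivity.} If $\mathcal F(x)=\mathcal F(y)$, then $y_i=B_ix_i$ for unique $B_i\in\Gamma$, since $\Gamma$ acts freely. Comparing the two lifts $\gamma_{x_i,A_{ij}x_j}$ and $B_i^{-1}\gamma_{B_ix_i,A_{ij}B_jx_j}$ of $\varphi_{ij}$ through $x_i$ forces $A_{ij}x_j=B_i^{-1}A_{ij}B_jx_j$, i.e. $B_j=A_{ij}^{-1}B_iA_{ij}$ on every edge. By connectedness of $T^{(1)}$ this determines every $B_j$ from $B_1$, and carrying the relation around loops based at vertex $1$ shows that $B_1$ commutes with every product $A_{i_0i_1}\cdots A_{i_{m-1}i_m}$ with $i_0=i_m=1$. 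Those products constitute the image of the surjection $\pi_1(T^{(1)},1)\twoheadrightarrow\pi_1(|T|)\xrightarrow[\cong]{\psi_*}\pi_1(M)=\Gamma$, so $B_1\in Z(\Gamma)$; since $(M,g)$ has strictly negative curvature, $\Gamma$ is a closed surface group of genus $\ge 2$, which is centerless, so $B_1=1$ and hence all $B_i=1$.

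\emph{Continuity of $\mathcal F^{-1}$} is where the real work lies. Recovering the $x_i$ from the vertex positions $\varphi(i)\in M$ alone does not suffice, since those determine the $x_i$ only up to deck transformations. Instead I would set $\delta_0=\operatorname{inj}(M)>0$, and for $\varphi'\in\tilde X$ with $d_{\tilde X}(\varphi',\varphi)<\delta_0$ use the minimizing geodesics from $\varphi(u)$ to $\varphi'(u)$---unique and depending continuously on $u\in T^{(1)}$ at this scale---as a homotopy $H$ from $\varphi$ to $\varphi'$ with $\operatorname{length}H(i,\cdot)=d_g(\varphi(i),\varphi'(i))\le d_{\tilde X}(\varphi',\varphi)$. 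Running the surjectivity construction with this $H$, but starting the lifts at $\mathcal F^{-1}(\varphi)_i$ rather than at $\tilde q_i$, produces exactly $\mathcal F^{-1}(\varphi')$, and each coordinate has been displaced by at most $\operatorname{length}H(i,\cdot)$. Hence $\mathcal F^{-1}$ is $1$-Lipschitz on every $\delta_0$-ball, so continuous, and $\mathcal F$ is a homeomorphism.
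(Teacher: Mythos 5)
The paper does not actually supply a proof of this statement (it is dismissed as ``routine but lengthy''), so there is nothing to compare against line by line; judged on its own, your proof is correct and complete in all essential points. Well-definedness and the $1$-Lipschitz estimate via convexity of $t\mapsto d(\gamma_{a,b}(t),\gamma_{a',b'}(t))$ in the Cartan--Hadamard manifold $\tilde M$ are fine, and your surjectivity argument (lifting the homotopy square over each edge and using uniqueness of constant-speed geodesics in $\tilde M$) is exactly the right mechanism. The two genuinely delicate points are handled properly: for injectivity you correctly observe that equal images force $y_i=B_ix_i$ with $B_j=A_{ij}^{-1}B_iA_{ij}$ along edges, that the edge-loop products of the $A_{ij}$ exhaust $\Gamma$ because $\pi_1(T^{(1)})\to\pi_1(|T|)\cong\Gamma$ is onto, and that triviality of $B_1$ then needs $Z(\Gamma)=1$ --- i.e.\ genus $\ge 2$; it is worth flagging explicitly that this is where the standing curvature assumption of Section 2 enters, since the statement as literally written fails for a flat torus (translating all $x_i$ by a common deck transformation gives the same $\varphi$). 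For continuity of the inverse you rightly note that vertex positions in $M$ determine the $x_i$ only up to deck transformations, and your fix --- restricting to a $\delta_0$-ball with $\delta_0=\operatorname{inj}(M)$, using the unique short geodesic homotopy, rerunning the lifting construction based at $\mathcal F^{-1}(\varphi)$, and invoking the already-proved injectivity to identify the output with $\mathcal F^{-1}(\varphi')$ --- yields the local $1$-Lipschitz bound and hence continuity. Two cosmetic remarks: the $A_{ij}$ in the paper are defined by lifting a reference geodesic mapping with vertex lifts $\tilde q_i$, in which case $\varphi[\tilde q_1,\dots,\tilde q_n]$ \emph{is} that reference map and the base-point homotopy step becomes immediate; and since your homotopy-class argument quotes the continuity of $\mathcal F$, it is cleaner to prove the Lipschitz estimate first and then conclude membership in $\tilde X$.
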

Here we omit the proof of Theorem 2.1 which is routine but lengthy.
In the remaining of this section,
for any $x,y,z\in\tilde M$ and $u,v\in T_x\tilde M$, we denote
\begin{enumerate}

\item  $d(x,y)$ as the intrinsic distance between $x,y$ in $(\tilde M,\tilde g)$, and

\item  $v(x,y)=\exp_x^{-1}y\in T_x\tilde M$, and

\item  $\triangle xyz$ as the geodesic triangle in $\tilde M$ with vertices $x,y,z$, which could possibly be degenerate, and

\item  $\angle yxz$ as the inner angle of $\triangle xyz$ at $x$ if $d(x,y)>0$ and $d(x,z)>0$, and
\item  $|v|$ as the norm of $v$ under the metric $\tilde g_x$, and

\item  $u\cdot v$ as the inner product of $u$ and $v$ under the metric $\tilde g_x$.

\end{enumerate}
By scaling the metric if necessary, we may assume that the Gaussian curvatures of $(M,g)$ and $(\tilde M,\tilde g)$ are bounded above by $-1$.

\subsection{Proof of the Uniqueness}
We first prove the following Lemma \ref{CAT} using CAT($0$) geometry. See Theorem 4.3.5 in  \cite{burago2001course} and Theorem 1A.6 in \cite{bridson2013metric} for the well-known comparison theorems.

\begin{lemma}
\label{CAT}
Assume $x,y,z\in\tilde M$, then
\begin{enumerate}
	\item $|v(z,x) -  v(z,y)|\leq d(x,y)$, and
	\item $ v(x,y)\cdot  v(x,z)+ v(y,x)\cdot v(y,z)\geq d(x,y)^2$,
\end{enumerate}
and the equality holds if and only if $\triangle xyz$ is degenerate.
\end{lemma}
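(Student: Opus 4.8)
The plan is to compare the geodesic triangle $\triangle xyz$ in $(\tilde M,\tilde g)$ with its Euclidean comparison triangle and apply the Alexandrov angle comparison. Recall that $(\tilde M,\tilde g)$ is a complete simply connected Riemannian surface of Gaussian curvature $\le -1<0$, hence in particular a uniquely geodesic CAT($0$) space, so the comparison theorems cited above (\cite{burago2001course,bridson2013metric}) apply.

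I would first dispose of the degenerate configurations in which two of $x,y,z$ coincide: there $v(a,a)=0$ and $|v(a,b)|=d(a,b)$ make both sides of (1) and (2) directly computable and equal. So assume $x,y,z$ are pairwise distinct, whence $d(x,y),d(y,z),d(z,x)>0$, and let $\bar x\bar y\bar z\subset\mathbb E^2$ be the Euclidean triangle with $|\bar x-\bar y|=d(x,y)$, $|\bar y-\bar z|=d(y,z)$, $|\bar z-\bar x|=d(z,x)$ --- which exists since these numbers satisfy the triangle inequality --- and let $\bar\alpha,\bar\beta,\bar\gamma$ denote its interior angles at $\bar x,\bar y,\bar z$. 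The CAT($0$) angle comparison then gives $\angle yxz\le\bar\alpha$, $\angle xyz\le\bar\beta$, and $\angle xzy\le\bar\gamma$.

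Granting this, (1) and (2) reduce to elementary Euclidean identities. For (1): $T_z\tilde M$ is a Euclidean inner product space in which $|v(z,x)|=d(z,x)$, $|v(z,y)|=d(z,y)$, and the angle between $v(z,x)$ and $v(z,y)$ equals $\angle xzy$; by the law of cosines there, $|v(z,x)-v(z,y)|^2=d(z,x)^2+d(z,y)^2-2d(z,x)d(z,y)\cos\angle xzy$, whereas the law of cosines in $\mathbb E^2$ gives $d(x,y)^2=d(z,x)^2+d(z,y)^2-2d(z,x)d(z,y)\cos\bar\gamma$; since $\cos$ is decreasing on $[0,\pi]$ and $\angle xzy\le\bar\gamma$, the former is at most the latter. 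For (2): using $v(x,y)\cdot v(x,z)=d(x,y)d(x,z)\cos\angle yxz$ and $v(y,x)\cdot v(y,z)=d(x,y)d(y,z)\cos\angle xyz$, the left-hand side of (2) equals $d(x,y)\bigl(d(x,z)\cos\angle yxz+d(y,z)\cos\angle xyz\bigr)$; in $\mathbb E^2$ one has the identity $d(x,z)\cos\bar\alpha+d(y,z)\cos\bar\beta=d(x,y)$ (project $\bar z$ orthogonally onto the line $\bar x\bar y$, or equivalently expand $(\bar y-\bar x)\cdot(\bar z-\bar x)+(\bar x-\bar y)\cdot(\bar z-\bar y)=|\bar y-\bar x|^2$), and the same monotonicity of $\cos$ yields $d(x,z)\cos\angle yxz+d(y,z)\cos\angle xyz\ge d(x,y)$, i.e. the left-hand side of (2) is $\ge d(x,y)^2$.

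For the equality case: equality in (1) forces $\cos\angle xzy=\cos\bar\gamma$, hence $\angle xzy=\bar\gamma$, while equality in (2) forces the two nonnegative quantities $d(x,z)(\cos\angle yxz-\cos\bar\alpha)$ and $d(y,z)(\cos\angle xyz-\cos\bar\beta)$ to vanish, hence $\angle yxz=\bar\alpha$; either way some interior angle of $\triangle xyz$ equals the corresponding angle of $\bar x\bar y\bar z$. By the flat triangle lemma for CAT($0$) spaces \cite{bridson2013metric} this implies that the convex hull of $\{x,y,z\}$ in $\tilde M$ is isometric to the solid comparison triangle $\bar x\bar y\bar z$; being nondegenerate, its interior would then be a flat open subset of $\tilde M$, which is impossible since the curvature is $\le -1$. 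Hence $\triangle xyz$ is degenerate. Conversely, in every degenerate configuration $x,y,z$ lie on a common geodesic, each defined interior angle is $0$ or $\pi$, and a short direct computation confirms equality in both (1) and (2). I expect the rigidity step --- upgrading equality in the angle comparison to degeneracy via the flat triangle lemma --- to be the only point requiring care; the two inequalities themselves are immediate once the angle comparison is in hand.
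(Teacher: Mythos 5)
Your proposal is correct and follows essentially the same route as the paper: build the Euclidean comparison triangle, invoke the CAT($0$) angle comparison, and reduce both inequalities to the law of cosines and the identity $(\bar y-\bar x)\cdot(\bar z-\bar x)+(\bar x-\bar y)\cdot(\bar z-\bar y)=|\bar x-\bar y|^2$. The only difference is bookkeeping: the paper asserts the strict angle inequalities outright for nondegenerate triangles, while you obtain the non-strict comparison and then upgrade via the flat triangle lemma and the curvature bound $K\le-1$, which is a slightly more explicit justification of the same strictness.
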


\begin{proof}
If $\triangle xyz$ is degenerate, then there exists a geodesic $\gamma$ in $\tilde M$ such that $x,y,z\in\gamma$, and then the proof is straightforward. So we assume that $\triangle xyz$ is non-degenerate.

(1) Three points $v(z,x), v(z,y)$, and $0$ in $T_z\tilde M$ determine a Euclidean triangle, where $|v(z,x)|=d(x,z)$, and $|v(z,y)|=d(z,y)$ and the angle between $v(z,x)$ and $v(z,y)$ is equal to $\angle xzy$. Then by the CAT(0) comparison theorem,
$$
|v(z,x)-v(z,y)|< d(x,y).
$$

(2) Let $x',y',z'\in\mathbb R^2$ be such that
$$
|x'-z'|_2=|v(x,z)|,\quad\quad\quad |y'-z'|_2=|v(y,z)|,\quad\text{and}\quad|x'-y'|_2=|v(x,y)|.
$$
Then by the CAT$(0)$ comparison theorem,
$\angle yxz< \angle y'x'z'$, and $\angle xyz< \angle x'y'z'$.
Hence,
$$
v(x,y)\cdot  v(x,z)+ v(y,x)\cdot v(y,z)>
(y'-x')\cdot(z'-x')+(x'-y')\cdot(z'-y')=|x'-y'|_2^2
= d(x,y)^2.
$$
\end{proof}

\begin{proof}[Proof of the uniqueness part in Theorem \ref{existence} ]
If not, assume $\varphi[x_1,...,x_n]$ and $\varphi[x_1',...,x_n']$ are two different geodesic mappings that are both $w$-balanced for some weight $w$.
We are going to prove a discrete maximum principle for the function $j\mapsto d(x_j,x_j')$.
Assume $i\in V$ is such that $d(x_i,x_i')=\max_{j\in V}d(x_j,x_j')>0$. By lifting the $w$-balanced assumption to $\tilde M$, we have that

\begin{equation}
\sum_{j:ij\in E}w_{ij} v(x_i,A_{ij}x_j)=0,
\end{equation}
and
\begin{equation}
\sum_{j:ij\in E}w_{ij}v(x_i',A_{ij}x_j')=0.
\end{equation}

Then by part (1) of Lemma \ref{CAT} and equation (1),
\begin{align*}
&\left|\sum_{j:ij\in E}w_{ij}v(x_i,A_{ij}x_j')\right| \\
= &\left|\sum_{j:ij\in E}w_{ij}v(x_i,A_{ij}x_j') - \sum_{j:ij\in E}w_{ij}v(x_i,A_{ij}x_j)\right|\\
\leq&\sum_{j:ij\in E}w_{ij} d(A_{ij}x_j,A_{ij}x_j')\\
=&\sum_{j:ij\in E}w_{ij} d(x_j,x_j')\\
\leq &d(x_i,x_i')\sum_{j:ij\in E}w_{ij}.
\end{align*}

By part (2) of Lemma \ref{CAT}, equation (2), and the Cauchy-Schwartz inequality,
\begin{align*}
&d(x_i,x_i')\cdot\left|\sum_{j:ij\in E}w_{ij}v(x_i,A_{ij}x_j')\right| \\
\geq &v(x_i,x_i')\cdot\sum_{j:ij\in E}w_{ij}v(x_i,A_{ij}x_j') +
v(x_i',x_i)\cdot\sum_{j:ij\in E}w_{ij}v(x_i',A_{ij}x_j')\\
\geq  &\sum_{j:ij\in E}w_{ij}\cdot d(x_i,x_i')^2.
\end{align*}

Therefore, the equalities hold in both inequalities above. Then
for any neighbor $j$ of $i$, $d(x_j,x_j')=d(x_i,x_i')=\max_{k\in V}d(x_k,x_k')$, and
$A_{ij}x_j$ is on the geodesic determined by $x_i$ and $x_i'$. Then the one-ring neighborhood of $p(x_i)$ in $\varphi[x_1,...,x_n](T^{(1)})$ degenerates to a geodesic arc.
By the connectedness of the surface, we can repeat the above argument and deduce that $d(x_j,x_j')=d(x_i,x_i')$ for any $j\in V$. Further, for any triangle $\sigma\in F$, $\varphi[x_1,...,x_n](\partial\sigma)$ degenerates to a geodesic arc.

It is not difficult to extend $\varphi[x_1,...,x_n]$ to a continuous map $\tilde\varphi$ from $|T|$ to $M$, such that for any triangle $\sigma\in F$, $\tilde\varphi(\sigma)=\varphi[x_1,...,x_n](\partial\sigma)$ being a geodesic arc.

It is also not difficult to prove that $\tilde\varphi$ is homotopic to $\psi$. Therefore, $\tilde\varphi$ is
degree-one  and surjective. This is contradictory to that $\tilde\varphi(|T|)$ is a finite union of geodesic arcs.
\end{proof}

\subsection{Proof of the Existence}
Here we prove a stronger existence result.

\begin{theorem}
\label{proper}
Given a compact subset $K$ of $\mathbb R^{\vec E}_{>0}$, there exists a compact subset $K'=K'(M,T,\psi,K)$ of $\tilde X$ such that for any $w\in K$, there exists a $w$-balanced geodesic mapping $\varphi\in K'$.
\end{theorem}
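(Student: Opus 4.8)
The plan is to derive Theorem \ref{proper} from an \emph{a priori estimate} — all $w$-balanced mappings with $w\in K$ lie in one fixed compact subset of $\tilde X$ — together with a short degree argument that upgrades this to existence. Throughout I work in the parametrization $\tilde M^n\cong\tilde X$ of Theorem 2.1, write the lifted balancing condition at a vertex $i$ as $\sum_{j:ij\in E}w_{ij}\,v(x_i,A_{ij}x_j)=0$, and fix the reference configuration $x^0=(\tilde q_1,\dots,\tilde q_n)$ together with its (finite) edge lengths $l^0_{ij}=d(\tilde q_i,A_{ij}\tilde q_j)$. Enlarging $K$, I may assume $K$ is compact, convex and contains the constant weight $\mathbf 1$. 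For $w\in K$ put $\hat w_{ij}=w_{ij}/\sum_{k:ik\in E}w_{ik}$, so that $(\hat w_{ij})$ is the transition matrix of an irreducible random walk on $T^{(1)}$; let $c=c(w)$ be its stationary distribution, normalized by $\sum_i c_i=1$. As $w$ ranges over the compact set $K$, all the $\hat w_{ij}$ and $c_i$ stay bounded away from $0$ and $\infty$.

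\emph{The a priori estimate.} Let $\varphi=\varphi[x_1,\dots,x_n]$ be $w$-balanced with $w\in K$, and set $\delta_i=d(x_i,\tilde q_i)$, $R=\max_i\delta_i$, $l_{ij}=d(x_i,A_{ij}x_j)$. The CAT($0$) hinge comparison at the vertex $x_i$, applied to the triangle $\triangle\,\tilde q_i\,x_i\,(A_{ij}x_j)$, gives
$$
d(\tilde q_i,A_{ij}x_j)^2\ \ge\ \delta_i^2+l_{ij}^2-2\,v(x_i,\tilde q_i)\cdot v(x_i,A_{ij}x_j).
$$
Averaging against $\hat w_{ij}$ over $j$ and using $\sum_j\hat w_{ij}v(x_i,A_{ij}x_j)=0$, the cross term cancels and one gets the variance-type inequality $\sum_j\hat w_{ij}\,d(\tilde q_i,A_{ij}x_j)^2\ge\delta_i^2+\sum_j\hat w_{ij}\,l_{ij}^2$. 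Now multiply by $c_i$, sum over $i$, bound $d(\tilde q_i,A_{ij}x_j)\le l^0_{ij}+\delta_j$ on the left and expand the square; by stationarity $\sum_i c_i\hat w_{ij}=c_j$, so the term $\sum_j c_j\delta_j^2$ produced on the left cancels $\sum_i c_i\delta_i^2$ on the right, leaving
$$
\sum_{ij\in E} l_{ij}^2\ \le\ a+bR
$$
for constants $a,b$ depending only on $M,T,\psi,K$; in particular every $l_{ij}=O(\sqrt{1+R})$. To convert this into a bound on $R$, bring in the topology of $M$: since $\pi_1(T^{(1)})\to\pi_1(M)$ is surjective, fix a vertex $i_0$ and two edge-loops based at $i_0$ representing hyperbolic deck transformations $\gamma_1,\gamma_2$ whose axes $\alpha_1,\alpha_2$ cross transversally (possible because $\pi_1(M)$ is non-elementary). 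Lifting such a loop starting at $x_{i_0}$ ends at $\gamma_s x_{i_0}$ with length at most the sum of the $l_{ij}$ along it, so $d(x_{i_0},\gamma_s x_{i_0})=O(\sqrt{1+R})$; as $\gamma_s$ is hyperbolic this forces $d(x_{i_0},\alpha_s)=O(\sqrt{1+R})$, and as $\alpha_1,\alpha_2$ cross transversally this forces $d(x_{i_0},\alpha_1\cap\alpha_2)=O(\sqrt{1+R})$, hence $\delta_{i_0}=O(\sqrt{1+R})+O(1)$. Propagating along a spanning tree of $T^{(1)}$ (the lift of an edge-path from $i_0$ to $i$ has length at most the sum of the relevant $l_{ij}$) yields $\delta_i=O(\sqrt{1+R})+O(1)$ for every $i$, so $R\le C\sqrt{1+R}+C'$ and therefore $R\le R_0=R_0(M,T,\psi,K)$. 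Thus every such $\varphi$ lies in the compact set $K'=\prod_i\overline{B_{\tilde M}(\tilde q_i,\,R_0+1)}\subset\tilde X$, with all $w$-balanced mappings ($w\in K$) in the interior.

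\emph{Existence via degree.} The lifted balancing condition defines, for each $w$, a continuous vector field $F_w$ on $\tilde M^n$; since $\tilde M$ is contractible this is a map $\tilde M^n\cong\mathbb R^{2n}\to\mathbb R^{2n}$ whose zeros are exactly the $w$-balanced mappings. By the a priori estimate, for every $w\in K$ these zeros lie in $\operatorname{int}K'$, so $F_w\ne0$ on $\partial K'$ and the Brouwer degree $\deg(F_w,\operatorname{int}K',0)$ is defined; by homotopy invariance along the convex family $K$ it is independent of $w$. At $w=\mathbf 1$, $F_{\mathbf 1}=-\nabla E_{\mathbf 1}$ is minus the gradient on the Hadamard manifold $\tilde M^n$ of the convex Dirichlet energy $E_{\mathbf 1}(\varphi)=\tfrac12\sum_{ij\in E}l_{ij}^2$, which has a minimizer $x^\ast\in\operatorname{int}K'$ by Colin de Verdi\`ere \cite{de1991comment}; convexity gives $\langle -\nabla E_{\mathbf 1}(x),v(x,x^\ast)\rangle\ge 0$ everywhere, and equality on $\partial K'$ would force a geodesic segment of minimizers in $\partial K'$, impossible since all zeros of $F_{\mathbf 1}$ lie in $\operatorname{int}K'$. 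Hence the straight-line homotopy from $-\nabla E_{\mathbf 1}$ to the radial field $x\mapsto v(x,x^\ast)$ is nonvanishing on $\partial K'$, so $\deg(F_{\mathbf 1},\operatorname{int}K',0)=1$. Therefore $\deg(F_w,\operatorname{int}K',0)=1\ne0$ for all $w\in K$, producing a $w$-balanced $\varphi\in K'$, and this $K'$ proves Theorem \ref{proper}.

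\emph{Main obstacle.} The substantive step is the a priori estimate, and within it the passage from $\sum l_{ij}^2\lesssim R$ to a bound on $R$: a purely local balancing estimate cannot control the ``translational'' degree of freedom of the configuration, so the topology of $M$ — here, two loops with transversally crossing axes — must be brought in. The other delicate point is that $w$ need not be symmetric, so there is no energy functional to minimize directly; weighting the variance inequalities by the stationary distribution $c(w)$ is precisely what makes the displacement terms telescope. (For symmetric $w$ one could instead sum against the weighted degrees $\sum_k w_{ik}$, recovering the classical energy-monotonicity picture.)
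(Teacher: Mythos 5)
Your proposal is correct in substance, but it reaches Theorem \ref{proper} by a genuinely different route than the paper. The paper's a priori estimate (Lemma \ref{keylemma}) is deliberately stronger than yours: it bounds every configuration whose residue vectors merely satisfy the one-sided sign condition $v(x_i,q)\cdot r_i\le 0$, not just exact zeros, and this stronger form is exactly what feeds the boundary condition of the elementary no-retraction argument (Lemma \ref{toplemma}) applied to the parallel-transported residue map on a large ball; the estimate itself is proved by a chain of hyperbolic-trigonometric comparisons (Lemma \ref{steps}) and a ``strictly increasing distances'' chain argument. You instead bound only the exact zeros, via the stationary-distribution-weighted CAT($0$) hinge comparison -- a neat device for the non-symmetric weights, playing the role that energy monotonicity plays in the symmetric case -- followed by a global argument using two hyperbolic deck transformations with transversally crossing axes to convert the edge-length bound $\sum l_{ij}^2\lesssim R$ into a bound on $R$; existence is then recovered by Brouwer degree, homotoping $w$ inside the (convexified) $K$ to the constant weight and anchoring the degree with the convexity of the Dirichlet energy and the minimizer from Colin de Verdi\`ere \cite{de1991comment}. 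What each buys: the paper's argument is self-contained and needs no symmetric-weight existence result, at the cost of the long list of estimates in Lemma \ref{steps}; yours is shorter and conceptually transparent (degree plus convexity), at the cost of importing \cite{de1991comment} -- though this import is removable, since your crossing-axes/spanning-tree step already shows that sublevel sets of $\max_{ij}l_{ij}$ are compact, which gives the minimizer of $E_{\mathbf 1}$ directly.

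Two points you assert without proof should be justified or cited, since they are the geometric heart of your estimate (the analogue of the paper's Lemma \ref{steps}(b)--(d)): that a bound on the displacement $d(x,\gamma x)$ of a hyperbolic deck transformation bounds $d(x,\mathrm{axis}(\gamma))$ linearly (e.g.\ via the CAT($0$) inequality $d(x,\gamma x)^2\ge 4\,d(x,\mathrm{Min}(\gamma))^2+\ell(\gamma)^2$, with $\mathrm{Min}(\gamma)$ equal to the axis because the curvature is strictly negative), and that a point uniformly close to two transversally crossing geodesics is boundedly close to their intersection (via angle comparison in curvature $\le-1$, which gives divergence at least as fast as in $\mathbb H^2$). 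Both are true and standard, so this is a matter of completeness rather than a gap; likewise the identification of $F_w$ with a map to $\mathbb R^{2n}$ should be made via a fixed trivialization (the paper uses parallel transport to $T_q\tilde M$), but this is routine.
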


We first introduce a topological Lemma \ref{toplemma} and then reduce Theorem \ref{proper} to Lemma \ref{keylemma}.

\begin{lemma}
\label{toplemma}
Suppose $B^{n}=\{x\in\mathbb R^n: |x|\leq1\}$ is the unit ball in $\mathbb R^n$, and $f:B^{n}\rightarrow\mathbb R^n$ is a continuous map such that $x\neq f(x)/|f(x)|$ for any $x\in\partial B^n=S^{n-1}$ with $f(x)\neq0$. Then $f$ has a zero in $B^n$.
\end{lemma}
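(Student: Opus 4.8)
The plan is to argue by contradiction and reduce the statement to Brouwer's fixed point theorem. Suppose, to the contrary, that $f(x)\neq 0$ for every $x\in B^n$. Then $g(x):=f(x)/|f(x)|$ is a well-defined continuous map $g:B^n\to S^{n-1}\subseteq B^n$. By Brouwer's fixed point theorem, $g$ has a fixed point $x_0\in B^n$. Since $g$ takes values in $S^{n-1}$, we must have $x_0=g(x_0)\in S^{n-1}=\partial B^n$, and moreover $f(x_0)\neq 0$ with $x_0=f(x_0)/|f(x_0)|$. This directly contradicts the hypothesis that $x\neq f(x)/|f(x)|$ for every $x\in\partial B^n$ with $f(x)\neq 0$. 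Hence $f$ must have a zero in $B^n$. (The case $n=0$ is trivial, so one may assume $n\geq 1$.)

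If one prefers a self-contained argument not quoting Brouwer, the same nonvanishing assumption produces a continuous $g:B^n\to S^{n-1}$, and one derives a contradiction from degree theory: on one hand $g|_{S^{n-1}}$ is null-homotopic through the homotopy $(x,t)\mapsto g(tx)$, so it has degree $0$; on the other hand the straight-line homotopy $(x,t)\mapsto \frac{(1-t)g(x)-tx}{|(1-t)g(x)-tx|}$ is well-defined, since $(1-t)g(x)=tx$ forces (by taking norms) $t=1/2$ and then $g(x)=x$, which is excluded by hypothesis on $\partial B^n$; thus $g|_{S^{n-1}}$ is homotopic to the antipodal map of $S^{n-1}$, which has degree $(-1)^n\neq 0$. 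The two computations are incompatible.

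I do not expect any genuine obstacle: the lemma is essentially a repackaging of Brouwer's theorem, and the only point requiring care is verifying that the boundary hypothesis is precisely what prevents the fixed point $x_0$ of $g$ from being a legitimate point of $\partial B^n$, together with the elementary check that the straight-line homotopy above is nowhere zero in the alternative argument.
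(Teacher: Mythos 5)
Your proposal is correct, and it actually contains two arguments. Your second, degree-theoretic argument is essentially the paper's own proof: the paper also assumes $f$ is nowhere zero, forms $g=f/|f|:B^n\to S^{n-1}$, notes $g|_{S^{n-1}}$ is null-homotopic because it extends over the contractible ball, and uses the straight-line homotopy $H(x,t)=\frac{tg(x)+(1-t)(-x)}{|tg(x)+(1-t)(-x)|}$ to connect $g|_{S^{n-1}}$ to the antipodal map, which is not null-homotopic; your verification that the denominator cannot vanish (taking norms forces $t=1/2$ and then $g(x)=x$, excluded by the boundary hypothesis) is a detail the paper dismisses as ``easy to verify,'' so it is a welcome addition. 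Your first argument, by contrast, is a genuinely different and more elementary route: instead of any homotopy or degree computation you apply Brouwer's fixed point theorem directly to $g:B^n\to S^{n-1}\subseteq B^n$, and observe that a fixed point $x_0=g(x_0)$ must lie on $S^{n-1}$ and satisfy $x_0=f(x_0)/|f(x_0)|$, contradicting the hypothesis. This buys brevity and avoids discussing null-homotopy of the antipodal map, at the cost of invoking Brouwer as a black box; the paper's (and your alternative) argument is self-contained modulo the standard fact that $-\mathrm{id}|_{S^{n-1}}$ is not null-homotopic. Both are sound, and your remark that $n=0$ is trivial (or irrelevant, since the lemma is applied with $n\geq 2$) disposes of the only degenerate case.
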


\begin{proof}
If not, $g(x) = f(x)/|f(x)|$ is a continuous map from $B^n$ to $\partial B^n$. Since $B^n$ is contractible, $g(x)$ is null-homotopic, and thus $g|_{S^{n-1}}$ is also null-homotopic. Since $g(x)\neq x$,
it is easy to verify that
$$
H(x,t)=\frac{tg(x)+(1-t)(-x)}{|tg(x)+(1-t)(-x)|}
$$
is a homotopy between $g|_{S^{n-1}}$ and $-id|_{S^{n-1}}$. This contradicts to that $-id|_{S_{n-1}}$ is not null-homotopic.
\end{proof}

\begin{lemma}
\label{keylemma}
We fix an arbitrary point $q\in\tilde M$. If $w\in \mathbb R^{\vec E}_{>0}$ and $(x_1,...,x_n)\in\tilde M^n$ satisfies that
\begin{equation}
\label{residue}
 v (x_i,q)\cdot\sum_{j:ij\in E}w_{ij}  v(x_i,A_{ij}x_j)\leq 0
\end{equation}
for any $i\in V$, then
$$
\sum_{i\in V}d(x_i,q)^2< R^2
$$
for some constant $R>0$ which depends only on $M,T,\psi,q$ and
$$
\lambda_w:=\frac{\max_{ij\in E} w_{ij}}{\min_{ij\in E} w_{ij}} .
$$
\end{lemma}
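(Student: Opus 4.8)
The plan is to extract a discrete near-maximum principle from \eqref{residue} by means of $\mathrm{CAT}(0)$ comparison, combine it with an a priori bound on the edge lengths, and then invoke the fixed homotopy type — recorded in the deck transformations $A_{ij}$ — together with the $\mathrm{CAT}(-1)$ hypothesis to exclude large $d(x_i,q)$. Write $\rho_i=d(x_i,q)$, $W_i=\sum_{j:ij\in E}w_{ij}$, $p_{ij}=w_{ij}/W_i$, and $l_{ij}=d(x_i,A_{ij}x_j)$ for the length of the $ij$-edge. First I would apply the one-vertex $\mathrm{CAT}(0)$ law of cosines in the geodesic triangle with vertices $x_i$, $q$, $A_{ij}x_j$ — i.e.\ the angle comparison $v(x_i,q)\cdot v(x_i,A_{ij}x_j)\ge\tfrac12\bigl(\rho_i^2+l_{ij}^2-d(q,A_{ij}x_j)^2\bigr)$ already used in the proof of Lemma~\ref{CAT} — multiply by $w_{ij}>0$, sum over $j$, and use \eqref{residue}. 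Since $d(q,A_{ij}x_j)=d(A_{ji}q,x_j)\le\rho_j+D$ with $D:=\max_{ij\in E}d(q,A_{ij}q)$ a constant depending only on $M,T,\psi,q$, this yields for every $i$ the master inequality
$$\rho_i^2+\sum_{j:ij\in E}p_{ij}l_{ij}^2\le\sum_{j:ij\in E}p_{ij}(\rho_j+D)^2.$$

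Next I would run a maximum-principle argument on this inequality. Put $\rho=\max_i\rho_i$ and $\delta_i=\rho-\rho_i\ge 0$; I may assume $\rho$ exceeds an explicit threshold, since otherwise $\sum_i\rho_i^2$ is already bounded. Dropping the $l_{ij}^2$ terms, expanding $(\rho_j+D)^2=(\rho+D-\delta_j)^2$, and using $\delta_j^2\le\rho\delta_j$, the master inequality becomes $(\rho+2D)\sum_j p_{ij}\delta_j\le 2D\rho+D^2+2\rho\delta_i$ at each vertex; hence if $\delta_i\le\Delta$ then $\sum_j p_{ij}\delta_j\le 2D+2\Delta+D^2/\rho$, and since $p_{ij}\ge(\lambda_w\deg i)^{-1}$ every neighbour $j$ of $i$ has $\delta_j$ bounded in terms of $\Delta,D,\lambda_w$ and $\deg i$. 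Starting from a vertex where $\delta_i=0$ and propagating along the connected graph $T^{(1)}$ (at most $n$ steps) gives $\delta_i\le c_1$ for all $i$, with $c_1=c_1(M,T,\psi,q,\lambda_w)$; thus every $\rho_i$ lies in $[\rho-c_1,\rho]$. Feeding this back, $\sum_j p_{ij}l_{ij}^2\le(\rho+D)^2-(\rho-c_1)^2=O(\rho)$, so every edge satisfies $l_{ij}\le c_2\sqrt{\rho}$ with $c_2=c_2(M,T,\psi,q,\lambda_w)$.

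Finally I would derive a contradiction for large $\rho$. Because $\psi$ induces an isomorphism on $\pi_1$ and $T^{(1)}\hookrightarrow|T|$ is $\pi_1$-surjective, the holonomy homomorphism $\pi_1(T^{(1)},1)\to\Gamma$ determined by the $A_{ij}$ is onto; as $\Gamma$ is non-abelian ($M$ has genus $\ge 2$) I would fix two cycles $C_1,C_2$ in $T^{(1)}$ based at vertex $1$ whose images $g_1,g_2\in\Gamma$ do not commute, with $m$ an upper bound for their numbers of edges. Developing the loop $\varphi\circ C_s$ from $x_1$ produces a path in $\tilde M$ from $x_1$ to $g_s x_1$ of length $\sum_{ij\in C_s}l_{ij}\le m c_2\sqrt\rho$, so $d(x_1,g_s x_1)\le m c_2\sqrt\rho$ for $s=1,2$. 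On the other hand, $g_1,g_2$ are axial isometries of the $\mathrm{CAT}(-1)$ space $(\tilde M,\tilde g)$, and since they do not commute their axes $\ell_1,\ell_2$ have four distinct endpoints at infinity (else $\langle g_1,g_2\rangle$ would be abelian); hence $\{x\in\tilde M:\ d(x,g_1x)\le K,\ d(x,g_2x)\le K\}$ lies in the intersection of two tubular neighbourhoods of $\ell_1,\ell_2$ of radii $O(K)$, which is bounded and contained in $\{x:d(x,q)\le\Phi(K)\}$ for some $\Phi$ with $\Phi(K)\le a+bK$, $a,b$ depending only on $\ell_1,\ell_2,q$. Since $d(x_1,q)=\rho_1\ge\rho-c_1$, the inequality $\rho-c_1>\Phi(m c_2\sqrt\rho)$ holds once $\rho$ is large, and then $\max_s d(x_1,g_s x_1)>m c_2\sqrt\rho$ — contradicting the bound above. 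Therefore $\rho\le R_0$ for some $R_0=R_0(M,T,\psi,q,\lambda_w)$, and $\sum_i d(x_i,q)^2\le nR_0^2<R^2$ with $R^2:=nR_0^2+1$.

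The technically delicate part is the propagation in the second step: the master inequality only controls the differences $\rho_i-\rho_j$, so one must iterate across $T^{(1)}$ while keeping the constant finite — it is multiplied by roughly $\lambda_w\deg$ at each step, which is exactly where the dependence on $\lambda_w$ and on the combinatorics of $T$ enters. Conceptually the heart of the matter is the last step: it is the non-triviality of the holonomy, i.e.\ the fixed homotopy type, that prevents the developed vertices from drifting to infinity, and the $\mathrm{CAT}(-1)$ hypothesis is used precisely there, to guarantee that two non-commuting isometries have a bounded common displacement sublevel set whose size grows at most linearly in $K$. (Throughout, several routine facts from $\mathrm{CAT}(-1)$ geometry — that nontrivial deck transformations are axial, and the quantitative behaviour of their displacement functions and of neighbourhoods of their axes — are being taken for granted and would need to be recorded.)
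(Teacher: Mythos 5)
Your argument is correct in outline, but it is a genuinely different route from the paper's. The paper proves Lemma \ref{keylemma} through the chain of estimates (a)--(h) of Lemma \ref{steps}: hyperbolic trigonometry (law of sines/cosines plus CAT($-1$) comparison) shows that very long geodesic triangles with one vertex at $q$ are thin, so a long edge at $x_i$ pointing toward $q$ forces, via the balancing hypothesis, another long edge pointing away from $q$, whose far endpoint is strictly farther from $q$; iterating produces $n+1$ vertices with strictly increasing distances to $q$, contradicting $|V|=n$. You instead integrate the hypothesis into a single ``master inequality'' $\rho_i^2+\sum_j p_{ij}l_{ij}^2\le\sum_j p_{ij}(\rho_j+D)^2$ (a legitimate consequence of the CAT($0$) angle comparison already used in Lemma \ref{CAT}, with $D=\max_{ij}d(q,A_{ij}q)$), run a discrete maximum principle to get $\max_i\rho_i-\min_i\rho_i\le c_1$ and $l_{ij}\le c_2\sqrt{\rho}$ (your propagation is sound; the constant grows roughly like $(\lambda_w\deg)^n$, which is admissible), and then rule out large $\rho$ by developing two loops with non-commuting holonomies $g_1,g_2\in\Gamma$ to get $d(x_1,g_sx_1)=O(\sqrt\rho)$, incompatible with the fact that the joint displacement sublevel set $\{x:d(x,g_1x)\le K,\ d(x,g_2x)\le K\}$ has size at most linear in $K$. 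What each approach buys: the paper's proof is self-contained and quantitative, using curvature $\le-1$ only through explicit comparison triangles; yours cleanly separates the roles of the hypotheses --- CAT($0$) comparison for the maximum principle, and negative curvature plus the fixed homotopy class (the $A_{ij}$) only in the final holonomy step --- and would generalize to any setting where $\tilde M$ is Gromov hyperbolic and the holonomy is non-elementary. The price is exactly the facts you flag as ``routine'': that nontrivial deck transformations are axial with positive translation length, that $d(x,gx)\le K$ forces $d(x,\mathrm{axis}(g))\le aK+b$ (e.g.\ via a power of $g$ and the projection lemma in a $\delta$-hyperbolic space), that non-commuting elements of the discrete group $\Gamma$ have axes with four distinct endpoints at infinity, and that $R$-neighbourhoods of two such axes intersect in a set of diameter linear in $R$. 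These are standard and true, but they are not proved in your text and would have to be recorded (with the dependence of the resulting constants only on $M,T,\psi,q,\lambda_w$ made explicit) for the proof to be complete; note the paper deliberately avoids importing them.
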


The vector in Figure \ref{residuev}
$$ r_i = \sum_{j:ij\in E}w_{ij}  v(x_i,A_{ij}x_j)$$
is defined as the \textit{residue vector} $ r_i$ at $x_i$ of $\varphi[x_1, \cdots, x_n]$ with respect to the weight $w$.  Notice that a geodesic mapping $\varphi$ is $w$-balanced if and only if all its residue vectors vanish with respect to $w$. Lemma \ref{keylemma} means that if all the residue vectors are dragging $x_i$'s away from $q$, then all the $x_i$'s must stay not far away from $q$.

The definition of residue vector is similar to the concept of \textit{discrete tension field} in \cite{gaster2018computing}.

\begin{figure}[h!]
  \includegraphics[width=0.4\linewidth]{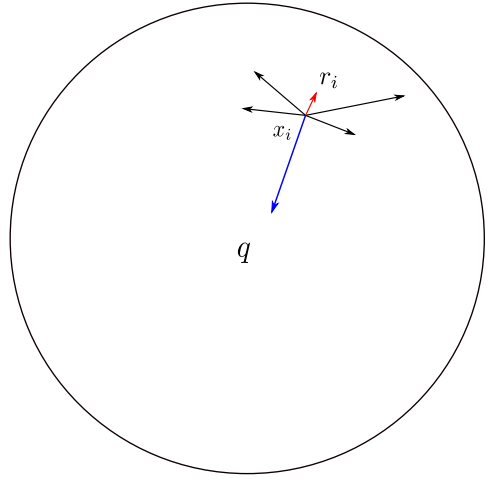}
  \caption{The residue vector and Lemma \ref{keylemma}.}
    \label{residuev}
\end{figure}

\begin{proof}[Proof of Theorem \ref{proper}]
Fix an arbitrary base point $q\in\tilde M$, and then by Lemma \ref{keylemma} we can pick a sufficiently large constant $R=R(M,T,\psi,K)>0$ such that if
$$
\sum_{i=1}^n d(x_i,q)^2=R^2,
$$
there exists $i\in V$ such that
$$
 v (x_i,q)\cdot\sum_{j:ij\in E}w_{ij}  v(x_i,A_{ij}x_j)> 0.
$$
We will prove that the compact set
$$
K'=\{\varphi[x_1,...,x_n]:\sum_{i=1}^n d(x_i,q)^2\leq R^2\}
$$
is satisfactory.

For any $x\in \tilde M$, let $P_x:T_x\tilde M\rightarrow T_q\tilde M$ be the parallel transport along the geodesic $\gamma_{x,q}$. Set
$$
B=\{( v_1,..., v_n)\in (T_q\tilde M)^n:\sum_{i=1}^n| v_i|^2\leq 1\}
$$
as a Euclidean $2n$-dimensional unit ball,
and construct a map
$
F:B\rightarrow (T_q\tilde M)^n
$
in the following three steps. Firstly, we construct $n$ points $x_1,...,x_n\in\tilde M$ as
$
x_i(v_1,...,v_n)=\exp_q(Rv_i).
$
Secondly, we compute the residue vector at each $x_i$ as
$$
r_i=\sum_{j:ij\in E} w_{ij}  v(x_i,A_{ij}x_j)\in T_{x_i}\tilde M.
$$
Lastly, we pull back the residues to $T_q\tilde M$ as
$F(v_1,...,v_n) = \left(P_{x_1}(r_1),...,P_{x_n}(r_n)\right).$

Notice that the map $(v_1,...,v_n)\mapsto \varphi[x_1,...,x_n]$ is a homeomorphism from $B$ to $K'$, and $F(v_1,...,v_n)=0$ if and only if the corresponding $\varphi[x_1,...,x_n]$ in $K'$ is $w$-balanced map. Hence, it suffices to prove that $F$ has a zero in $B$. By Lemma \ref{toplemma} it suffices to prove that for any $(v_1,...,v_n)\in\partial B$,
$$
(v_1,...,v_n)\neq \frac{F(v_1,..,v_n)}{|F(v_1,...,v_n)|}.
$$
Suppose $(v_1,...,v_n)$ is an arbitrary point on $\partial B$, and then
it suffices to prove that there exists $i\in V$ such that
$v_i\cdot F_i(v_1,...,v_n)=v_i\cdot P_{x_i}(r_i)<0$.

Notice that $x_1(v_1,...,v_n),...,x_n(v_1,...,v_n)$ satisfy that $\sum_{i=1}^nd(q,x_i)^2=R^2$, so by our assumption on $R$, there exists $i\in V$ such that
$$
v (x_i,q)\cdot\sum_{j:ij\in E}w_{ij}  v(x_i,A_{ij}x_j)=v(x_i,q)\cdot r_i> 0,
$$
and thus,
$$
v_i\cdot P_{x_i}(r_i)=-\frac{1}{d(q,x_i)}P_{x_i}\left( v(x_i,q)\right)\cdot P_{x_i}(r_i)
=-\frac{1}{d(q,x_i)} v(x_i,q)\cdot r_i< 0.
$$
\end{proof}

In the rest of this subsection, we will prove Lemma \ref{keylemma} by contradiction. Let us first sketch the idea of the proof. Assume
$\sum_{i\in V}d(x_i,q)^2$ is very large,
then by a standard compactness argument, there exists a long edge $ij$ in the geodesic mapping $\varphi[x_1,..., x_n]$. Assume $d(q,x_i)\geq d(q,x_j)$, then the corresponding long edge $\gamma_{x_i,A_{ij}x_j}$ in $\tilde M$ is pulling $x_i$ towards $q$. This implies that  there exists another long edge $\gamma_{x_i,A_{ik}x_k}$ dragging $x_i$ away from $q$, otherwise the residue vector $r_i$ would not drag $x_i$ away from $q$. It can be shown that $d(q,x_k)>d(q,x_i)$.
Repeating the above steps, we can find an arbitrary long sequence of vertices such that the distance from each of these vertices to $q$ is increasing. This is impossible as we only have finitely many vertices.

\begin{figure}[h!]
  \includegraphics[width=0.8\linewidth]{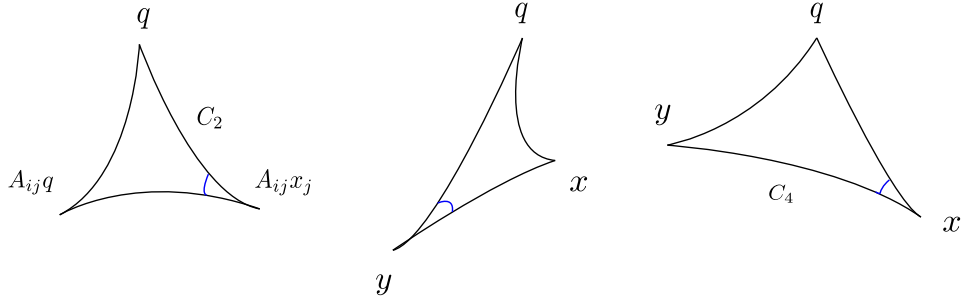}
  \caption{Triangles in Step (b), (c), and (d).}
    \label{triangles}
\end{figure}

Here is a listing of properties serving as the building blocks of the proof of Lemma \ref{keylemma}.

\begin{lemma}
\label{steps}
(a) For any constant $C>0$, there exists a constant $C_1=C_1(M,T,\psi,C)>0$ such that if
$$
\sum_{i\in V}d(x_i,q)^2\geq C_1,
$$
then
$$
\max_{ij\in E}d(x_i,A_{ij}x_j)\geq C.
$$

(b) There exists a constant $C_2=C_2(M,T,\psi)>0$ such that if
$$
d(A_{ij}x_j,q)\geq C_2,
$$
then
$$
\angle (A_{ji}q)x_j q = \angle q(A_{ij}x_j)(A_{ij}q) \leq\frac{\pi}{8}.
$$

(c) There exists a constant $C_3>0$ such that if $x,y\in\tilde M$ satisfy that
$$
d(y,q)\geq d(x,q)+C_3,
$$
then
$$\angle xyq\leq \frac{\pi}{4}.
$$

(d) There exists a constant $C_4>0$ such that if $x,y\in\tilde M$ satisfy that
$$
d(x,y)\geq C_4, \quad\text{ and }\quad d(x,q)\geq d(y,q),
$$
then
$$
\angle yxq\leq\frac{\pi}{8}.
$$

(e) For any constant $C>0$, there exists a constant $C_5=C_5(M,T,\psi,C)>0$ such that if
$$
\max_{ij\in E}d(x_i,A_{ij}x_j)\geq C_5,
$$
then there exists $ij\in E$ such that
$$
\frac{ v(x_i,q)}{| v(x_i,q)|}\cdot v(x_i,A_{ij}x_j)\geq C.
$$

(f) For any constant $C>0$, there exists a constant $C_6=C_6(M,T,\psi ,\lambda_w,C)>0$ such that if
$$
\frac{ v(x_i,q)}{| v(x_i,q)|}\cdot v(x_i,A_{ij}x_j)\geq C_6
$$
for some edge $ij\in E$, then there exists $ik\in E$ such that
$$
\frac{ v(x_i,q)}{| v(x_i,q)|}\cdot v(x_i,A_{ik}x_k)\leq -C.
$$

(g) For any constant $C>0$, there exists a constant $C_7=C_7(M,T,\psi,C)>0$ such that if
$$
\frac{ v(x_i,q)}{| v(x_i,q)|}\cdot v(x_i,A_{ik}x_k)\leq -C_7,
$$
then
$$
d(x_k,q)\geq d(x_i,q)+C.
$$

(h) For any constant $C>0$, there exists a constant $C_8=C_8(M,T,\psi,C)>0$ such that if
$$
d(x_j,q)\geq d(x_i,q)+C_8,
$$
then
$$
\frac{ v(x_j,q)}{| v(x_j,q)|}\cdot v(x_j,A_{ji}x_i)\geq C.
$$
\end{lemma}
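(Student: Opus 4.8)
The plan is to split the eight items into three groups. Parts (c) and (d) are purely CAT$(-1)$ statements about a triple of points of $\tilde M$, and I would prove them by the $\mathbb H^2$-comparison theorem (legitimate since the curvature is $\le -1$) followed by the hyperbolic law of cosines in the comparison triangle. Parts (b), (e), (g), (h) I would reduce to (c), (d) and the CAT$(0)$ comparison (cf.\ Lemma \ref{CAT}) by absorbing the bounded ``defect''
\[
D:=\max_{ij\in E}\bigl(d(q,A_{ij}q)+d(q,A_{ij}^{-1}q)\bigr)<\infty,
\]
produced by the finitely many deck transformations attached to edges: for any $x\in\tilde M$ and edge $ij$ one has $|d(A_{ij}x,q)-d(x,q)|\le D$ and $|d(x,A_{ij}q)-d(x,q)|\le D$, so in every estimate $A_{ij}x_j$ may be replaced by a point at distance $d(x_j,q)\pm D$ from $q$, and angle bounds transported across the $A_{ij}$'s by $\Gamma$-equivariance. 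Part (f) is elementary bookkeeping. Part (a) is the one genuinely global step and, as I explain at the end, is the main obstacle. Below write $\widehat u:=u/|u|$ for a nonzero vector $u$.

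For (b), (c), (d): since each triangle in question is thinner than its $\mathbb H^2$-comparison triangle, it suffices to bound the comparison angle. In (c), with $a=d(x,q)$, $b=d(y,q)\ge a+C_3$ and $c=d(x,y)\in[\,b-a,\,a+b\,]$, the comparison angle at $y$ is largest when $yx$ is tangent to the circle of radius $a$ about $q$, giving $\sin\bar\angle xyq\le \sinh a/\sinh b\le 1/\cosh C_3$, so any $C_3$ with $\cosh C_3\ge\sqrt2$ works (the two degenerate cases $c=b\pm a$ give angle $0$, so the comparison angle never exceeds $\pi/2$). In (d), the law of cosines and $d(y,q)\le d(x,q)$ give $\cos\bar\angle yxq\ge\coth\bigl(d(x,q)\bigr)\tanh\bigl(d(x,y)/2\bigr)\ge\tanh(C_4/2)$, so any $C_4$ with $\tanh(C_4/2)\ge\cos(\pi/8)$ works (note $d(x,q)\ge C_4/2>0$ is automatic). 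In (b), the displayed angle is the angle at $x_j$ of the triangle $\triangle(A_{ji}q)\,x_j\,q$, whose side $A_{ji}q$--$q$ has length $\le D$ while its other two sides have length $\ge C_2-D$; by comparison this angle $\to 0$ as $C_2\to\infty$, so a suitable $C_2$ exists.

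Now (e), (g), (h), (f). For (e): take the directed edge $ij$ realizing $\max_{kl\in E}d(x_k,A_{kl}x_l)\ge C_5$, oriented so that $d(x_i,q)\ge d(x_j,q)$; then $d(x_i,q)\ge d(A_{ij}x_j,q)-D$, and after enlarging $C_5$ the estimate of (d)---proved verbatim with the hypothesis $d(x,q)\ge d(y,q)$ relaxed to $d(x,q)\ge d(y,q)-D$---applied to $x_i,A_{ij}x_j$ makes $\angle(A_{ij}x_j)x_iq$ small, so $\widehat{v(x_i,q)}\cdot v(x_i,A_{ij}x_j)=d(x_i,A_{ij}x_j)\cos\angle$ is large and positive. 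For (h), symmetrically, once $C_8\ge C_4+D$ the points $x_j,A_{ji}x_i$ satisfy $d(x_j,A_{ji}x_i)\ge C_8-D\ge C_4$ and $d(x_j,q)\ge d(A_{ji}x_i,q)$, so (d) gives $\angle(A_{ji}x_i)x_jq\le\pi/8$ and $\widehat{v(x_j,q)}\cdot v(x_j,A_{ji}x_i)\ge\cos(\pi/8)(C_8-D)$. For (g): the hypothesis forces $d(x_i,A_{ik}x_k)\ge C_7$ and $v(x_i,q)\cdot v(x_i,A_{ik}x_k)\le-C_7\,d(x_i,q)$, so the CAT$(0)$ comparison inequality
\[
d(A_{ik}x_k,q)^2 \;\geq\; d(x_i,A_{ik}x_k)^2+d(x_i,q)^2-2\,v(x_i,q)\cdot v(x_i,A_{ik}x_k) \;\geq\; \bigl(d(x_i,q)+C_7\bigr)^2
\]
gives $d(x_k,q)\ge d(A_{ik}x_k,q)-D\ge d(x_i,q)+C_7-D$, so $C_7=C+D$ works. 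Finally (f) is applied when the residue hypothesis \eqref{residue} holds, i.e.\ $\sum_{k:ik\in E}w_{ik}\,\widehat{v(x_i,q)}\cdot v(x_i,A_{ik}x_k)\le0$: the term at $j$ contributes at least $(\min_{kl}w_{kl})C_6$, so the fewer than $n$ remaining terms sum to at most $-(\min_{kl}w_{kl})C_6$, and one of them, carrying weight at most $(\max_{kl}w_{kl})=\lambda_w(\min_{kl}w_{kl})$, satisfies $\widehat{v(x_i,q)}\cdot v(x_i,A_{ik}x_k)\le-C_6/(n\lambda_w)$; take $C_6=n\lambda_wC$.

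The main obstacle is (a), the only place where the global structure of the cover intervenes. Here I would use that $\Gamma$ acts freely, properly discontinuously and cocompactly on the Hadamard surface $\tilde M$, and that negative curvature forces $M$ to have genus $\ge2$, so that $\Gamma$ is non-elementary and fixes no point of $\partial_\infty\tilde M$; consequently, for any finite generating set $S\subset\Gamma$, the displacement function $x\mapsto\max_{s\in S}d(x,sx)$ is proper on $\tilde M$. Choose such an $S$ represented by closed edge-paths in $T^{(1)}$ based at the vertex $1$ of combinatorial length $\le L$---possible because $T^{(1)}\hookrightarrow M$ is $\pi_1$-surjective and the deck transformation of such a loop is the corresponding ordered product of the $A_{ij}$'s. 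If $\max_{ij\in E}d(x_i,A_{ij}x_j)\le C$, then going once around each generating loop and using the triangle inequality and $\Gamma$-equivariance repeatedly yields $d(x_1,sx_1)\le LC$ for all $s\in S$, so $x_1$ lies in a compact set $K_1=K_1(M,T,\psi,C)$; running along an edge-path in $T^{(1)}$ from $1$ to each vertex $i$ then confines each $x_i$ to a compact set, whence $\sum_{i\in V}d(x_i,q)^2\le B(M,T,\psi,q,C)$. Taking $C_1:=B+1$ and contraposing gives (a). A careful write-up should only verify that $\lambda_w$ enters nowhere except in (f), that the numerical thresholds $\pi/8,\pi/4$ are met by the estimates above, and that the constants which secretly also depend on $q$ through $D$ cause no harm, since $q$ is fixed throughout and the final constant $R$ of Lemma \ref{keylemma} is allowed to depend on $q$.
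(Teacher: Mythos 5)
Your proposal is essentially correct, and for parts (b)--(h) it runs along the same lines as the paper: CAT$(-1)$ comparison plus hyperbolic trigonometry for the angle bounds, the CAT$(0)$ estimate of Lemma \ref{CAT} for (g) (you square the paper's inequality, which is the same content), and the identical $C_6=n\lambda_w C$ counting for (f) --- where you rightly make explicit that (f) only makes sense under the standing hypothesis \eqref{residue} of Lemma \ref{keylemma}, which the paper uses silently in the line beginning ``$0\geq$''. The genuine differences are these. For (e) the paper argues in two cases according to whether $d(x_i,q)\geq d(A_{ij}x_j,q)$, invoking (b) and an angle triangle inequality in the second case, whereas you orient the longest edge by comparing $d(x_i,q)$ with $d(x_j,q)$ and absorb the bounded defect $\max_{ij\in E} d(q,A_{ij}q)$ into a relaxed form of (d); this collapses the case analysis (and makes (b) logically dispensable, though you prove it anyway) at the cost of re-deriving (d) with the defect, which your law-of-cosines bound does handle. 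For (h) you use (d) where the paper uses (c); both work, modulo choosing $C_8$ large enough that $\cos(\pi/8)(C_8-D)\geq C$ as well as $C_8\geq C_4+D$. The substantial divergence is (a): the paper deduces it from compactness of $\{\varphi\in\tilde X:\max_{ij\in E}\mathrm{length}(\varphi_{ij}([0,1]))\leq C\}$ together with the parametrization $\tilde M^n\cong\tilde X$ of Theorem 2.1, while you develop edge paths in $\tilde M$ to get $d(x_1,sx_1)\leq LC$ for a finite generating set $S$ of $\Gamma$ realized by edge loops in $T^{(1)}$ (legitimate, since $T^{(1)}$ carries $\pi_1(M)$), and then invoke properness of the displacement function $x\mapsto\max_{s\in S}d(x,sx)$ for a cocompact, non-elementary group with no fixed point on $\partial_\infty\tilde M$. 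This buys independence from the omitted Theorem 2.1, but imports a standard yet nontrivial fact about group actions on Hadamard manifolds; a full write-up should either prove that properness (the usual argument: an unbounded sublevel set forces a common fixed point of $S$, hence of $\Gamma$, at infinity) or give a precise citation. The remaining bookkeeping (acuteness of the comparison angles in (b), (c), the thresholds $\pi/8$, $\pi/4$, and the harmless dependence of all constants on the fixed point $q$) is, as you say, routine.
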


\begin{figure}[h!]
  \includegraphics[width=0.4\linewidth]{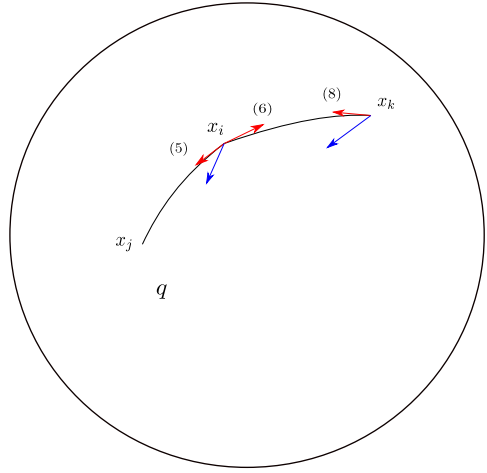}
  \caption{Vertices leaving the point $q$.}
    \label{drag}
\end{figure}

\begin{proof}[Proof of Lemma \ref{keylemma} assuming Lemma \ref{steps}]
For any $C>0$, there exists a sufficiently large constant $\tilde{C} = \tilde{C} (M,T,\psi,\lambda_w,C) $ determined from (a), (e), (f) and (g) in Lemma \ref{steps} such that if
$$
\sum_{i\in V}d(x_i,q)^2\geq \tilde{C} ,
$$
then there exist three vertices $x_i$, $x_j$, and $x_k$ shown in Figure \ref{drag} with
$$
d(x_k, q)\geq d(x_j, q) + C.
$$
Moreover, by (g), (e) and (f) of Lemma \ref{steps}, we can find another vertex $x_l$ such that
$$
d(x_l,q)\geq d(x_k, q)+C\geq d(x_j, q) + 2C,
$$
if the constant $\tilde{C} (M,T,\psi,\lambda_w,C)$ is sufficiently large.

Inductively,  we can find a sequence $i_1,...,i_{n+1}\in V$ such that
$$
d(x_{i_1},q)>d(x_{i_2},q)>...>d(x_{i_{n+1}},q).
$$
This contradicts to the fact that $V$ only has $n$ different elements.
\end{proof}

\begin{proof}[Proof of Lemma \ref{steps}]
(a)
By a standard compactness argument, the set
$$
\{\varphi\in\tilde X:\max_{ij\in E}~~length(\varphi_{ij}([0,1]))\leq C\}
$$
is a compact subset of $\tilde X$. Notice that $(x_1,...,x_n)\mapsto\varphi[x_1, ..., x_n]$ is a homeomorphism from $\tilde M^n$ to $\tilde X$ and
$$
length(\varphi_{ij}([0,1])) = d(x_i,A_{ij}x_j).
$$
Therefore,
$$
\{(x_1,...,x_n)\in\tilde M^n:\max_{ij\in E}~~d(x_i,A_{ij}x_j)\leq C\}
$$
is compact and the conclusion follows.

(b)
We claim that the constant $C_2$, which is determined by
$$
\sinh C_2=\frac{\max_{ij\in E}\sinh d(A_{ij}q,q)}{\sin\frac{\pi}{8}},
$$
is satisfactory.
Let $\triangle ABC$ be the hyperbolic triangle with the corresponding edge lengths
$$
a = d(A_{ij}x_j,q),\quad b=d(A_{ij}x_j,A_{ij}q),\quad c=d(A_{ij}q,q).
$$
Since $\tilde{M}$ is a CAT($-1$) space, it suffices to show that $\angle C\leq\pi/8$. By the hyperbolic law of sine,
$$
\sin\angle C=\frac{\sinh c\cdot\sin\angle A}{\sinh a}\leq\frac{\max_{ij\in E}\sinh d(A_{ij}q,q)\cdot 1}{\sinh C_2}=\sin\frac{\pi}{8}.
$$

(c)
We claim that the constant $C_3$ determined by
$$
\sinh C_3=\frac{1}{\sin\frac{\pi}{8}}
$$
is satisfactory.
Let $\triangle ABC$ be the hyperbolic triangle with the corresponding edge lengths
$$
a = d(x,y),\quad b=d(y,q),\quad c=d(x,q).
$$
Since $\tilde{M}$ is a CAT($-1$) space, it suffices to show that $\angle C\leq\pi/8$. By the hyperbolic law of sine
$$
\sin\angle C=\frac{\sinh c\cdot\sin\angle B}{\sinh b}\leq\frac{\sinh c}{\sinh b}\leq
\frac{\sinh c}{\sinh (c+C_3)}\leq\frac{\sinh c}{\sinh c\cdot\sinh C_3}=\sin\frac{\pi}{8}.
$$

(d)
We claim that the constant $C_4$ determined by
$$
\sin^2\frac{\pi}{8}\cdot \cosh C_4=2
$$
is satisfactory.
Let $\triangle ABC$ be the hyperbolic triangle with the corresponding edge lengths
$$
a = d(x,y),\quad b=d(y,q),\quad c=d(x,q).
$$
Since $\tilde{M}$ is a CAT($-1$) space, it suffices to show that $\angle B\leq\pi/8$. By the hyperbolic law of cosine,
$$
\cos A = -\cos B\cos C+\sin B\sin C\cosh a.
$$
Then,
$$
2\geq\sin B\sin C\cosh a\geq\sin B\sin C\cosh C_4=2\cdot\frac{\sin B\sin C}{\sin^2\frac{\pi}{8}}\geq2\cdot
\frac{\sin^2 B}{\sin^2\frac{\pi}{8}}.
$$
Thus, $\angle B\leq\pi/8$.

\begin{figure}[h!]
  \includegraphics[width=0.65\linewidth]{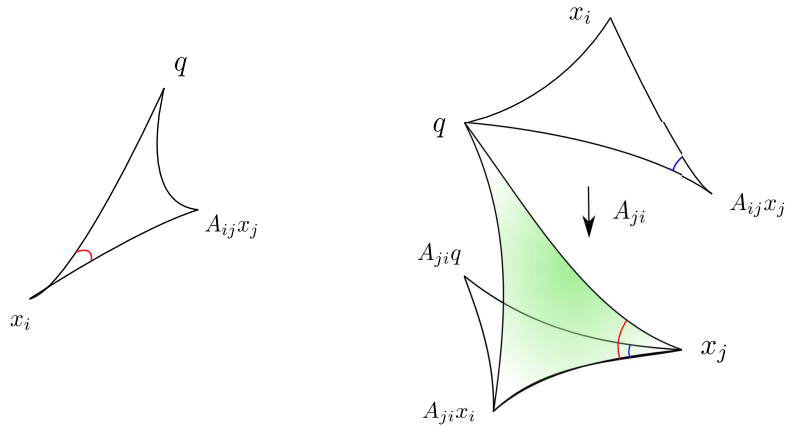}
  \caption{Triangles in Step (5).}
    \label{step5}
\end{figure}

(e)
We claim that the constant $C_5$ determined by
$$
C_5=\max\{C_4,2C_2,\sqrt 2C\}
$$
is satisfactory. Assume $ij\in E$ and $d(x_i,A_{ij}x_j)\geq C_5$, and we have two cases shown in Figure \ref{step5}.

If $d(x_i,q)\geq d(A_{ij}x_j,q)$, then by part (d)
$$
\angle (A_{ij}x_j)x_iq\leq\frac{\pi}{8}\leq\frac{\pi}{4},
$$
and
$$
\frac{ v(x_i,q)}{| v(x_i,q)|}\cdot v(x_i,A_{ij}x_j)=\cos (\angle (A_{ij}x_j)x_iq) \cdot d(x_i,A_{ij}x_j)\geq \frac{1}{\sqrt 2}C_5\geq C.
$$

If $d(x_i,q)\leq d(A_{ij}x_j,q)$,  then $d(A_{ij}x_j,q)\geq C_2$. By part (b) and part (d),
$$
\angle(A_{ji}q)x_j q \leq\frac{\pi}{8},\quad \text{ and }\quad
\angle (A_{ji}q)x_j(A_{ji}x_i)=\angle q(A_{ij}x_j)x_i\leq\frac{\pi}{8},
$$
and
$\angle qx_j(A_{ji}x_i)\leq\pi/4$. Therefore,
$$
\frac{ v(x_j,q)}{| v(x_j,q)|}\cdot v(x_j,A_{ji}x_i)=\cos (\angle (A_{ji}x_j)x_jq) \cdot d(x_j,A_{ji}x_i)\geq \frac{1}{\sqrt 2}C_5\geq C.
$$

(f)
We claim that the constant $C_6$ determined by
$$
C_6= n\lambda_w\cdot C
$$
is satisfactory.
If not, for any $ik\in E$, we have
$$
\frac{ v(x_i,q)}{| v(x_i,q)|}\cdot v(x_i,A_{ik}x_k)> -C.
$$
Then
$$
0\geq\frac{ v (x_i,q)}{| v (x_i,q)|}\cdot\sum_{ik\in E}w_{ik}  v(x_i,A_{ik}x_k)
> w_{ij}C_6+\sum_{ik\in E}w_{ik}(-C)
$$
$$
\geq w_{ij}C_6+\sum_{ik\in E}\lambda_w w_{ij}(-C)\geq
w_{ij}(C_6-n\lambda_w C)\geq0,
$$
and it is a contradiction.

(g) We claim that $C_7=C + \max_{ij\in E}d(A_{ij}q,q)$ is satisfactory. Notice that
$$d(A_{ij}x_j, q) = d(x_j, A_{ji}q) \leq d(x_j, q) + d(q, A_{ji}q) \leq d(x_j, q) +\max_{ij\in E}d(A_{ij}q,q).$$
By part (1) of Lemma \ref{CAT},
$$
d(A_{ij}x_j,q)
\geq| v(x_i,A_{ij}x_j)- v(x_i,q)|
\geq -\left( v(x_i,A_{ij}x_j)- v(x_i,q)\right)\cdot \frac{ v(x_i,q)}{| v(x_i,q)|}
$$
$$
=C_7+| v(x_i,q)|=C_7+d(x_i,q).
$$
Then
$$d(x_j, q) - d(x_i, q) \geq C_7 - \max_{ij\in E}d(A_{ij}q,q) = C.$$

(h)
We claim that the constant $C_8$ determined by
$$
C_8=\max\{C_3,\sqrt 2 C\}+\max_{ij\in E}d(A_{ij}q,q)
$$
is satisfactory. Notice that
$$
d(x_j,q)\geq d(x_i,q)+C_8\geq d(x_i,A_{ij}q)-d(A_{ij}q,q)+C_8\geq d(A_{ji}x_i,q)+\max\{C_3,\sqrt2C\}.
$$
Then by part (c), $\angle (A_{ji}x_i)x_jq\leq\pi/4$, and by the triangle inequality,
$$d(x_j, A_{ji}x_i) \geq d(x_j, q) - d(A_{ji}x_i, q) \geq \sqrt{2}C. $$
Therefore,
$$
\frac{ v(x_j,q)}{| v(x_j,q)|}\cdot v(x_j,A_{ji}x_i)=\cos (\angle (A_{ji}x_i)x_jq) \cdot d(x_j,A_{ji}x_i)\geq \frac{1}{\sqrt 2}\cdot \sqrt2C=C.
$$
\end{proof}

\subsection{Proof of the Continuity}
\begin{proof}[Proof of the continuity part of Theorem \ref{existence}]
If not, there exists $\epsilon>0$ and a weight $w$ and a sequence of weights $w^{(k)}$ such that

\begin{enumerate}
	\item $w^{(k)}$ converge to $w$, and
	\item $d_{\tilde X}(\Phi(w^{(k)}),\Phi(w))\geq\epsilon$ for any $k\geq1$.
\end{enumerate}

By the stronger existence result Theorem \ref{proper}, the sequence $\Phi(w^{(k)})$ are in some fixed compact subset $K'$ of $\tilde X$. By picking a subsequence, we may assume that $\Phi(w^{(k)})$ converge to some $\varphi\in \tilde X$. Since $\Phi(w^{(k)})$ is $w^{(k)}$-balanced,
then by the continuity of the residue vectors $r_i$,
$\varphi$ is $w$-balanced, and thus $\Phi(w)=\varphi$, which is contradictory to that $\Phi(w^{(k)})$ does not converge to $\Phi(w)$.
\end{proof}

\section{Proof of Theorem \ref{embedding}}
\subsection{Set up and preparations}
Assume $\varphi\in\tilde X$ is $w$-balanced for some weight $w$, and we will prove that $\varphi$ is an embedding. Recall that $q_i=\varphi(i)$ for each $i\in V$, and denote $l_{ij}$ as the length of $\varphi_{ij}([0,1])$ for any $ij\in E$. It is not difficult to show that $\varphi$ has a  continuous extension $\tilde\varphi$ defined on $|T|$, such that for any triangle $\sigma\in F$ a continuous lifting map $\Phi_\sigma$ of  $\tilde\varphi|_\sigma$ from $\sigma$ to $\tilde M$ will

\begin{enumerate}
	\item map $\sigma$ to a geodesic triangle in $\tilde M$ homeomorphically if $\varphi(\partial\sigma)$ does not degenerate to a geodesic, and
	\item map $\sigma$ to $\Phi_\sigma(\partial\sigma)$ if $\varphi(\partial\sigma)$ degenerates to a geodesic.
\end{enumerate}

The main tool to prove Theorem \ref{embedding} is the Gauss-Bonnet formula. We will need to define the inner angles for each triangle in $\varphi(T^{(1)})$, even for the degenerate triangles. A convenient way is to assign a ``direction" to each edge, even for the degenerate edges with zero length.

\begin{definition}
 A \textit{direction field} is a map $v:\vec E\rightarrow TM$ satisfying that
\begin{enumerate}
	\item $ v_{ij}\in T_{q_i}M$ for any $(i,j)\in \vec E$, and
	\item $| v_{ij}|=1$ for any $(i,j)\in \vec E$.
\end{enumerate}
Given a direction field $v$, define the inner angle of the triangle $\sigma = \triangle ijk$ at the vertex $i$ as
$$
\theta^i_\sigma=\theta^i_\sigma(v)=\angle  v_{ij}0  v_{ik}=\arccos (v_{ij} \cdot v_{ik}),
$$
where $0$ is the origin and $\angle  v_{ij}0  v_{ik}$ is the angle between $ v_{ij}$ and $ v_{ik}$ in $T_{q_i}M$.
\end{definition}

A direction field $v$ assigns a unit tangent vector in $T_{q_i}M$ to each directed edge starting from $i$, and determines the inner angles in $T$.

\begin{definition}
A direction field $v$ is \emph{admissible} if
\begin{enumerate}
		\item
	$$ v_{ij} = \frac{\varphi_{ij}'(0)}{l_{ij}}$$
	if $l_{ij}>0$, and
	\item $ v_{ij}=- v_{ji}$ in $T_{q_i}M =T_{q_j}M $ if $l_{ij}=0$, and
	\item for a fixed vertex $i\in V$, if $l_{ij}=0$ for any neighbor $j$ of $i$, then there exist neighbors $j$ and $k$ of $i$ such that $ v_{ij}=- v_{ik}$, and
	\item if $\sigma=\triangle ijk\in F$ and $l_{ij}=l_{jk}=l_{ik}=0$, then $\theta^i_\sigma(v)+\theta^j_\sigma(v)+\theta^k_\sigma(v)=\pi$.
\end{enumerate}
\end{definition}

An admissible direction field encodes the directions of the non-degenerate edges in $\varphi(T^{(1)})$, and the induced angle sum of a degenerate triangle is always $\pi$. Then for any admissible $v$ and triangle $\sigma\in F$, by the Gauss-Bonnet formula
\begin{equation}
\label{GB}
\pi=\sum_{i\in \sigma}\theta^i_\sigma(v)-\int_{\Phi_\sigma(\sigma)}KdA
\geq\sum_{i\in \sigma}\theta^i_\sigma(v)-\int_{\tilde\varphi(\sigma)}KdA.
\end{equation}
Here $dA$ is the area form on $(\tilde M,\tilde g)$ or $(M,g)$.

The concept of the direction field is similar to the \textit{discrete one form} defined in \cite{gortler2006discrete}. 

\subsection{Proof of Theorem \ref{embedding}}
The proof of Theorem \ref{embedding} uses the four lemmas below. We will postpone their proofs to the the subsequent subsections.
\begin{lemma}
\label{star}
If $v$ is admissible and $\theta=\theta(v)$, then for any $i\in V$,
$$
\sum_{\sigma:i\in \sigma}\theta^i_\sigma=2\pi,
$$
and for any $\sigma,\sigma'\in F$,
$\tilde\varphi(\sigma)\cap \tilde\varphi(\sigma')$ has area $0$.
\end{lemma}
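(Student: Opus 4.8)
The plan is to derive both assertions from a single global Gauss--Bonnet count, whose only essential ingredient is the \emph{local} inequality $\sum_{\sigma:\,i\in\sigma}\theta^i_\sigma\ge 2\pi$ at every vertex $i$. Granting this, Gauss--Bonnet on the individual faces together with Gauss--Bonnet on $M$ and the Euler-characteristic identity for a triangulated surface will squeeze the total angle so tightly that every inequality in the estimate becomes an equality; the equality at the vertices is the first conclusion, and the resulting equality of total curvature is the area-disjointness of the images.

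First I would prove the local inequality. Fix $i$ and list its neighbours $j_1,\dots,j_m$ in cyclic order, so that the faces at $i$ are $\sigma_k=\triangle i\,j_k\,j_{k+1}$ and $\sum_{\sigma:\,i\in\sigma}\theta^i_\sigma=\sum_{k=1}^m d(v_{ij_k},v_{ij_{k+1}})$, where $v$ is the given admissible direction field and $d(u,u')=\arccos(u\cdot u')$ is arclength distance on the unit circle $S\subset T_{q_i}M$. If every edge at $i$ is degenerate, admissibility~(3) produces neighbours $j,j'$ with $v_{ij}=-v_{ij'}$; cutting the cyclic sum at these two indices into two arcs and applying the triangle inequality for $d$ to each arc bounds the sum below by $d(v_{ij},v_{ij'})+d(v_{ij'},v_{ij})=2\pi$. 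Otherwise some $l_{ij}>0$, and writing the $w$-balance relation at $i$ through the direction field gives $\sum_{j:\,l_{ij}>0}w_{ij}l_{ij}\,v_{ij}=0$, a vanishing combination with positive coefficients. Here I would invoke the integral-geometric identity $\sum_k d(v_{ij_k},v_{ij_{k+1}})=\sum_k|A_k|=\int_S N(\xi)\,d\xi$, where $A_k$ is the shorter arc from $v_{ij_k}$ to $v_{ij_{k+1}}$ and $N(\xi)=\#\{k:\xi\in A_k\}$; since a shorter arc contains at most one of the antipodal pair $\{\xi,-\xi\}$, for a.e.\ $\xi$ the number $N(\xi)+N(-\xi)$ equals the number of indices $k$ for which $v_{ij_k}$ and $v_{ij_{k+1}}$ lie on opposite sides of the line $\mathbb R\xi$, that is, the number of sign changes of the cyclic sequence $(v_{ij_k}\!\cdot\xi^\perp)_{k=1}^m$; and dotting the balance relation with $\xi^\perp$ shows both signs of $v_{ij}\!\cdot\xi^\perp$ occur among the indices with $l_{ij}>0$, so this count is at least $2$. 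Therefore $\sum_{\sigma:\,i\in\sigma}\theta^i_\sigma=\tfrac12\int_S\big(N(\xi)+N(-\xi)\big)\,d\xi\ge\tfrac12\cdot 2\cdot 2\pi=2\pi$.

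Next I would run the global count. Summing the local inequality over $V$ gives $\sum_i\sum_{\sigma\ni i}\theta^i_\sigma\ge 2\pi n$. By \eqref{GB}, $\sum_{i\in\sigma}\theta^i_\sigma\le\pi+\int_{\tilde\varphi(\sigma)}K\,dA$ for each face $\sigma$. Since $\tilde\varphi$ is homotopic to the homeomorphism $\psi$, it has degree one, hence is surjective and $\bigcup_\sigma\tilde\varphi(\sigma)=M$, so $\sum_\sigma\int_{\tilde\varphi(\sigma)}|K|\,dA\ge\int_M|K|\,dA$. Using $|F|=2(n-\chi(M))$ (from $n-|E|+|F|=\chi(M)$ and $2|E|=3|F|$) and $\int_MK\,dA=2\pi\chi(M)$, one has $\pi|F|=2\pi n-\int_MK\,dA$, and assembling everything,
\[
2\pi n\ \le\ \sum_i\sum_{\sigma\ni i}\theta^i_\sigma\ =\ \sum_\sigma\sum_{i\in\sigma}\theta^i_\sigma\ \le\ \pi|F|+\sum_\sigma\int_{\tilde\varphi(\sigma)}K\,dA\ \le\ \pi|F|+\int_MK\,dA\ =\ 2\pi n .
\]
Hence every inequality is an equality. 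The first one, combined with the local estimate, forces $\sum_{\sigma:\,i\in\sigma}\theta^i_\sigma=2\pi$ for every $i$; the last one gives $\sum_\sigma\int_{\tilde\varphi(\sigma)}|K|\,dA=\int_M|K|\,dA$, i.e.\ almost every point of $M$ lies in exactly one set $\tilde\varphi(\sigma)$, so for $\sigma\ne\sigma'$ the intersection $\tilde\varphi(\sigma)\cap\tilde\varphi(\sigma')$ is $|K|\,dA$-null and, because $|K|\ge 1$, has zero area.

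The hard part will be the local inequality of the second paragraph: it is the only point where the $w$-balance hypothesis and the admissibility conditions are used, and the cruder alternatives---claiming the shorter arcs $A_k$ cover $S$, or tracking a winding number of the cyclic sequence of directions---both genuinely fail, so the sign-change/integral-geometry bookkeeping seems to be the right level of care. The remaining points (degenerate faces contributing $\int K\,dA=0$ in \eqref{GB}, measurability of the sets $\tilde\varphi(\sigma)$, and making the ``almost everywhere'' statements precise) should be routine.
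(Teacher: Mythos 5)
Your proof is correct and follows essentially the same route as the paper: establish the local inequality $\sum_{\sigma:i\in\sigma}\theta^i_\sigma\ge 2\pi$ at every vertex, then squeeze it against the per-face Gauss--Bonnet inequality \eqref{GB}, surjectivity of $\tilde\varphi$ (with $K\le -1$), and the Euler characteristic count, so that equality forces both conclusions. The only difference is that you supply a careful integral-geometric proof (via sign changes of $v_{ij}\cdot\xi^{\perp}$) of the local step that the paper merely asserts from the $w$-balance condition, which is a welcome added detail rather than a different approach.
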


Based on Lemma \ref{star}, if admissible direction fields exist, the image of the star of each vertex determined by $\tilde\varphi$ does not contain any flipped triangles overlapping with each other. If $\tilde\varphi(\sigma)$ does not degenerate to a geodesic arc for any triangle $\sigma\in F$, then $\tilde\varphi$ is locally homeomorphic and thus globally homeomorphic as a degree-one map. Therefore, we only need to exclude the existence of degenerate triangles.

Define an equivalence relation on $V$ as follows. Two vertices $i,j$ are equivalent if there exists a sequence of vertices  $i=i_0,i_1,...,i_k=j$ such that $l_{i_0i_1}=...=l_{i_{k-1}i_k}=0$. This equivalence relation introduces a partition $V=V_1\cup...\cup V_m.$ Denote $y_k\in M$ as the only point in $\varphi(V_k)$. For any $x\in M$ and $u,v\in T_xM$, denote $u\| v$ as $u$ and $v$ are parallel, i.e., there exists $(\alpha,\beta)\neq(0,0)$ such that $\alpha u+\beta v=0$.

The following Lemma \ref{prescribe} shows that there are plenty of choices of admissible direction fields.

\begin{lemma}
\label{prescribe}
For any $ v_1\in T_{y_1}M,..., v_m\in T_{y_m}M$, there exists an admissible $v$ such that $ v_{ij}\| v_k$ if $i\in V_k$ and $l_{ij}=0$.
\end{lemma}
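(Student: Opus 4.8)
The plan is to handle the non-degenerate edges (where the direction $v_{ij}=\varphi_{ij}'(0)/l_{ij}$ is forced by admissibility) separately from the degenerate ones, and to reduce the latter, one equivalence class at a time, to the choice of a single injective function on the vertices of that class. Every degenerate edge has both endpoints in one class $V_k$; fix $k$, put $e_k=v_k/|v_k|$ if $v_k\neq0$ and let $e_k$ be an arbitrary unit vector of $T_{y_k}M$ otherwise, and set $v_{ij}=\operatorname{sign}\!\big(f_k(j)-f_k(i)\big)\,e_k$ for each degenerate edge $ij$ with $i,j\in V_k$, where $f_k\colon V_k\to\mathbb R$ is injective and still to be chosen. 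For any such $f_k$ one has $v_{ij}=-v_{ji}$ and $v_{ij}\,\|\,v_k$ (vacuously when $v_k=0$) automatically, and, since the three values of $f_k$ on the vertices of a triangle with all edges degenerate are distinct, the vertex with the middle value contributes angle $\pi$ and the others contribute $0$, so condition~(4) holds. The only remaining requirement is condition~(3): on the connected graph $G_k$ with vertex set $V_k$ and edge set the degenerate edges, every vertex all of whose incident edges are degenerate --- call it \emph{interior}, and write $I_k$ for these vertices and $B_k=V_k\setminus I_k$ --- must have both a neighbour on which $f_k$ is strictly larger and one on which it is strictly smaller; i.e.\ $f_k$ must be injective with no interior local extremum.

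So the lemma reduces to producing such an $f_k$, and here one must use the geometry. First, a triangle of $T$ never has exactly two degenerate edges: if $l_{ij}=l_{jk}=0$ then $\varphi_{ij},\varphi_{jk}$ are constant and $\varphi_{ik}$, being the constant-speed geodesic in its homotopy class rel endpoints with that class trivial (because $\varphi\simeq\psi|_{T^{(1)}}$ and the boundary loop of a triangle is null-homotopic in $M$), is constant too, so $l_{ik}=0$. Consequently the link of an interior vertex is a cycle all of whose edges are degenerate, so an interior vertex is the centre of a wheel in $G_k$ and is therefore not a cut vertex of $G_k$ (removing it leaves the rim cycle, to which every other vertex is still joined by a path avoiding it); in particular cut vertices of $G_k$ lie in $B_k$. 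Moreover, for any cut vertex $u$ of $G_k$ every connected component of $G_k-u$ meets $B_k$: if a component $P\subseteq I_k$ were adjacent to $u$, then walking around the link of $u$ forces, step by step, each successive link vertex into $P\subseteq I_k$ and each successive triangle at $u$ to be fully degenerate, until the whole star of $u$ is degenerate and $u\in I_k$, contradicting the previous sentence. The same walk around a single boundary vertex shows $|B_k|\geq 2$ (using $B_k\neq\varnothing$, which holds because $\tilde\varphi\simeq\psi$ has degree one, so $T$ is not entirely degenerate). If $I_k=\varnothing$ any injective $f_k$ works; otherwise, adjoin to $G_k$ a new vertex $\ast$ joined to all of $B_k$: by the two facts above, $G_k^{+}$ is $2$-connected (deleting $\ast$ leaves $G_k$ connected; deleting any $v\in V_k$ leaves every component of $G_k-v$ joined to $\ast$ through a $B_k$-vertex). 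A bipolar orientation of $G_k^{+}$ with source $\ast$ and sink a vertex $t\in B_k$ (these exist for $2$-connected graphs) restricts to an acyclic orientation of $G_k$ in which every interior vertex, being non-adjacent to $\ast$, retains an incoming and an outgoing edge; taking $f_k$ to be an injective topological sort of it gives each interior vertex a strictly larger and a strictly smaller neighbour. Carrying this out for all classes and combining with the forced non-degenerate directions yields the desired admissible $v$; the verification of (1)--(4) and of $v_{ij}\,\|\,v_k$ is the routine check indicated above.

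The step I expect to be hardest is exactly the existence of $f_k$: having no interior local extremum is impossible for general graphs (e.g.\ with a single usable boundary vertex), so it has to be extracted from the hypotheses --- the facts that interior vertices are wheel centres and that the degenerate locus is a proper subcomplex of a closed surface are what force $|B_k|\geq2$ and the $2$-connectedness of $G_k^{+}$, which drive the bipolar-orientation argument. The rest, including turning admissibility into the sign conditions, is bookkeeping.
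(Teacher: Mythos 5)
Your proof is correct, and it reaches the key combinatorial object by a genuinely different route than the paper. The first reduction is the same in both: encode the degenerate directions as $\mathrm{sgn}(h(j)-h(i))\,v_k$ for an injective height function having no local extremum at vertices all of whose edges are degenerate, with the verification of the four conditions of admissibility being the routine check you indicate. The difference is in how the height function is produced. The paper proves a purely combinatorial lemma for an arbitrary proper subgraph of $T^{(1)}$, by induction on the number of vertices: assign distinct Dirichlet values on the (at least two) boundary vertices, solve the discrete harmonic equation, and then perturb by recursively constructed functions on the level sets; no curvature or homotopy input enters that lemma. You instead augment the degenerate graph $G_k$ by an apex joined to the boundary vertices $B_k$, prove the augmented graph is $2$-connected, and invoke the classical existence of bipolar orientations (st-numberings) for $2$-connected graphs; your $2$-connectivity rests on a genuinely geometric observation --- a triangle cannot have exactly two degenerate edges, since the third side would be a null-homotopic geodesic loop, hence constant because $\tilde M$ is uniquely geodesic --- which makes interior vertices wheel centres of $G_k$, together with surjectivity of $\tilde\varphi$ to force $|B_k|\geq 2$. (Both routes need the fact that not every edge is degenerate; the paper needs it too, as $E'\neq E$, when applying its lemma.) What your approach buys is an explicit one-shot construction in place of the induction-with-perturbation; what it costs is self-containedness (the st-numbering theorem is imported) and generality (the paper's lemma is a statement about arbitrary subgraphs of $T^{(1)}$ using only combinatorics of the closed-surface triangulation, whereas your argument consumes both the negative-curvature and the homotopy hypotheses through the two-degenerate-edges fact). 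If you keep your version, state the st-numbering theorem you use and include the short lift-to-$\tilde M$ argument for the degenerate-triangle claim, since that is where the geometry enters.
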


The following Lemma \ref{flatstar} shows that for any $V_k$ with at least two vertices, the image of its ``neighborhood" lies in a geodesic.

\begin{lemma}
\label{flatstar}
If $|V_k|\geq2$, then there exists $ v_k\in T_{y_k}M$ such that $ v_k\| \varphi_{ij}'(0)$ if $i\in V_k$ and $l_{ij}>0$.
\end{lemma}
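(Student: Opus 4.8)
The plan is to argue by contradiction using the Gauss–Bonnet estimate (\ref{GB}) together with the angle-sum identity from Lemma \ref{star}. Suppose $|V_k|\geq 2$ but no such common direction $v_k$ exists; that is, among the edges $\varphi_{ij}'(0)$ with $i\in V_k$ and $l_{ij}>0$, at least two point in non-parallel directions in $T_{y_k}M$ (recall all tangent spaces $T_{q_i}M$ for $i\in V_k$ are canonically identified with $T_{y_k}M$ since the $q_i$ coincide). The key observation is that the subcomplex $T_k$ spanned by $V_k$ — i.e.\ all edges $ij$ with $i,j\in V_k$, which necessarily have $l_{ij}=0$, together with the faces all of whose vertices lie in $V_k$ — is mapped by $\tilde\varphi$ entirely to the single point $y_k$, so all the ``action'' happens along the edges leaving $V_k$. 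I would first show that $T_k$, as an abstract subcomplex, is a surface with boundary (or the whole closed surface, which is quickly excluded since then $\tilde\varphi$ would be constant, contradicting that it is degree one). Walking around the boundary $\partial T_k$, the outgoing edges $\varphi_{ij}'(0)$ ($i\in V_k$, $j\notin V_k$, $l_{ij}>0$) and the non-degenerate boundary situation give a sequence of directions in the single tangent space $T_{y_k}M$.

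Next I would apply Lemma \ref{prescribe} cleverly: choose the free vectors $v_1,\dots,v_m$ so that $v_k$ is one of the outgoing directions (or any fixed direction), obtaining an admissible $v$ with $v_{ij}\parallel v_k$ whenever $i\in V_k$ and $l_{ij}=0$. For such an admissible field, every internal edge of $T_k$ has direction $\pm v_k$, so each internal face $\sigma$ of $T_k$ is degenerate with angle sum exactly $\pi$ by admissibility condition (4). For a ``boundary'' face $\sigma=\triangle ijk'$ with exactly one or two vertices outside $V_k$, the angles at the $V_k$-vertices are measured between a $\pm v_k$ direction and an outgoing direction $\varphi'_{\cdot}(0)$. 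Summing the Gauss–Bonnet inequality (\ref{GB}) over all faces $\sigma$ incident to $V_k$, and using $\sum_{\sigma\ni i}\theta^i_\sigma=2\pi$ from Lemma \ref{star} at every $i\in V_k$, the total angle contribution from the $V_k$-vertices is exactly $2\pi|V_k|$. On the other hand, since $\tilde\varphi$ collapses all of $T_k$ to a point and maps the boundary faces to thin regions, $\sum_\sigma \int_{\tilde\varphi(\sigma)}K\,dA$ over these faces is bounded in absolute value by $\int_M |K|\,dA<\infty$ — but more to the point, by negativity of $K$ it has a definite sign. Comparing $\pi\cdot(\#\text{faces incident to }V_k)$ against $2\pi|V_k| + \text{(angles at outside vertices)} - \int K\,dA$ should force a contradiction unless all outgoing directions collapse onto the line $\mathbb R v_k$, which is exactly the desired conclusion.

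More precisely, here is the mechanism I expect to make rigorous. The non-parallelism assumption means that as we traverse $\partial T_k$, the outgoing directions genuinely rotate, so the ``link'' of the collapsed point $y_k$ — thought of as a cyclic sequence of directions and angles — has total turning $\ne 0$. But the internal structure of $T_k$ forces every internal angle at a $V_k$-vertex to be $0$ or $\pi$ (directions are $\pm v_k$), so all of the $2\pi$ angle sum at each $i\in V_k$ must be ``spent'' either on the two boundary-adjacent faces at $i$ or on flat ($\pi$) contributions; a counting/combinatorial Gauss–Bonnet argument on the closed-up link then yields that the boundary directions satisfy a linear constraint incompatible with genuine rotation when $K<0$ gives strict inequalities in (\ref{GB}) for every non-degenerate boundary face. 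Alternatively — and this may be the cleaner route — I would directly invoke Lemma \ref{star}'s second conclusion (images of distinct faces have area-zero overlap) to show that the boundary faces incident to $V_k$ fit together around $y_k$ with disjoint interiors, hence their angles at $y_k$ sum to at most $2\pi$ per sheet, while admissibility and $|V_k|\ge 2$ force at least two sheets, overflowing $2\pi|V_k|$ unless the directions align.

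\textbf{Main obstacle.} The hard part will be handling the combinatorics of the subcomplex $T_k$ and its boundary carefully — in particular, a vertex $i\in V_k$ may appear on $\partial T_k$ in a complicated way (multiple boundary arcs, faces with two vertices in $V_k$ and one outside, or with one vertex in $V_k$ and two outside), so setting up the bookkeeping of ``which angles are forced to be $0$ or $\pi$'' versus ``which are free'' and verifying that the Gauss–Bonnet sum genuinely closes up requires care. I also expect a delicate point in ruling out that $T_k$ has no boundary (the case $V_k=V$), which needs the degree-one surjectivity of $\tilde\varphi$ established in the uniqueness argument of Section 2. Once the combinatorial framework is in place, the curvature-sign input from (\ref{GB}) and the angle identity from Lemma \ref{star} should close the argument.
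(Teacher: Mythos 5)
Your proposal is a plan rather than a proof, and the two steps it leaves open are precisely the substance of the lemma. First, you never isolate the local statement --- that at a single vertex $i\in V_k$ all the nondegenerate outgoing directions $\varphi_{ij}'(0)$ are mutually parallel --- nor do you supply a mechanism that would prove it. The paper proves this by using the $w$-balanced condition directly (not only through Lemma \ref{star}): if two outgoing directions at $i$ are non-parallel, balancedness produces a third outgoing direction $\varphi_{ij''}'(0)$ so that the three are not contained in any closed half-space of $T_{q_i}M$; then, choosing via Lemma \ref{prescribe} an admissible field whose degenerate-edge direction $v_{im}$ is parallel to $\varphi_{ij''}'(0)$ and using the cyclic order of the edges around $i$, the angle sum at $i$ strictly exceeds $2\pi$, contradicting Lemma \ref{star}. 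Your global Gauss--Bonnet count does not substitute for this: Lemma \ref{star} already pins the angle sum at each vertex to exactly $2\pi$, the degenerate faces have zero area so the area-overlap clause of Lemma \ref{star} says nothing about them, and nothing bounds the total angle of faces around the collapsed point $y_k$ by $2\pi$ ``per sheet'' without the local injectivity that is still to be established; indeed $2\pi|V_k|\geq 4\pi$ of angle genuinely sits at $y_k$, and the correct conclusion is collapse onto a line, not an impossible overflow. Your sketch also never invokes the $w$-balanced hypothesis beyond its hidden use inside Lemma \ref{star}, yet it is needed again here.

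Second, even granting the local statement, you do not address why the parallel lines at different vertices of $V_k$ are the same line in $T_{y_k}M$. The paper's route is: every edge $ij$ on $\partial D$ (where $D$ is the subcomplex spanned by $V_k$) lies in a face $\triangle ijm$ with $m\notin V_k$, and since $q_i=q_j$ the edges $im$ and $jm$ are the same geodesic, so $\varphi_{im}'(0)=\varphi_{jm}'(0)$ transports the direction from $i$ to $j$; one then needs $\partial D$ to be connected, which is itself nontrivial --- it uses that $\tilde\varphi(D)$ is a single point to see that $D$ is contractible in $|T|$, and a maximum principle derived from the $w$-balanced condition to rule out a disk component of $|T|-D$. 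Neither the transport step nor the connectivity step appears in your proposal; your ``walking around the boundary'' gestures at it, but the obstacle you flag yourself --- the combinatorics of $\partial T_k$ --- is exactly where these arguments live. As written, the proposal does not close.
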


Now let $v_k$ be as Lemma \ref{flatstar} if $|V_k|\geq2$, and arbitrary if $|V_k|=1$. Then construct an admissible direction field $v$ as in Lemma \ref{prescribe}, with induced inner angles $\theta^i_\sigma=\theta^i_\sigma(v)$. If the image of a triangle $\sigma$ under $\varphi$ degenerates to a geodesic, then its inner angles $\theta^i_\sigma$ are $\pi$ or $0$.
Let $F' \neq \emptyset$ be the set of degenerate triangles under $\varphi$.

\begin{lemma}
 \label{degenerate}
 If $\sigma\in F'$, $i\in\sigma$, and $\theta^i_\sigma=\pi$, then $\sigma'\in F'$ for any $\sigma'$ in the star neighborhood of the vertex $i$.
 \end{lemma}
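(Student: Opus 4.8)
The plan is to argue by contradiction using the Gauss--Bonnet inequality \eqref{GB} together with the angle-sum identity of Lemma \ref{star}. Suppose $\sigma = \triangle ijk \in F'$ with $\theta^i_\sigma = \pi$, so that $\theta^j_\sigma = \theta^k_\sigma = 0$, and suppose some triangle $\sigma'$ in the star of $i$ is \emph{not} in $F'$, i.e.\ $\tilde\varphi(\sigma')$ is a nondegenerate geodesic triangle with $\int_{\tilde\varphi(\sigma')} K\,dA < 0$ (here we use strict negativity of the curvature). The key geometric observation is that if $l_{ij} = l_{ik} = 0$ and $\theta^i_\sigma = \pi$, then $v_{ij} = -v_{ik}$, so the two edges $ij$ and $ik$ point in opposite directions at $q_i$; more generally, because $\sigma$ is degenerate with $\theta^i_\sigma = \pi$, the vertices $j$ and $k$ lie in the same equivalence class as $i$ (since $l_{ij}, l_{ik}$ may be positive only if the whole triangle image is a single geodesic arc through $q_i$, $q_j$, $q_k$ — one must check both sub-cases).

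First I would record the consequence of Lemma \ref{star}: for the vertex $i$,
$$
\sum_{\sigma'' : i \in \sigma''} \theta^i_{\sigma''} = 2\pi.
$$
Next, summing the Gauss--Bonnet inequality \eqref{GB} over all triangles $\sigma''$ in the star of $i$ and using that the triangle images have pairwise-zero-area intersection (again Lemma \ref{star}),
$$
\pi \cdot \#\{\sigma'' : i \in \sigma''\} \;\geq\; \sum_{\sigma'' : i \in \sigma''} \sum_{l \in \sigma''} \theta^l_{\sigma''} \;-\; \int_{\bigcup_{\sigma''} \tilde\varphi(\sigma'')} K\, dA.
$$
The inner-angle contributions from the apex $i$ sum to $2\pi$ by the displayed identity, and the contributions from the outer vertices sum to at least $0$; if additionally \emph{one} of the star triangles, namely $\sigma'$, is nondegenerate, then $-\int_{\tilde\varphi(\sigma')} K\,dA > 0$ contributes a strictly positive term while all other curvature integrals are $\geq 0$. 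Balancing these against the left side, I would extract a strict inequality that forces the total outer-angle sum around $i$ to be \emph{strictly negative}, which is absurd since each $\theta^l_{\sigma''} \in [0,\pi]$. Therefore no such $\sigma'$ exists, and every triangle in the star of $i$ is degenerate.

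The main obstacle I anticipate is handling the bookkeeping of the boundary terms carefully: when one sums \eqref{GB} over the star, the "outer" inner angles $\theta^j_{\sigma''}$ for $j \neq i$ need not individually vanish, and one must argue that, in the presence of the degenerate triangle $\sigma$ with $\theta^i_\sigma = \pi$, the angles at $i$ in the \emph{other} star triangles are constrained enough that the identity $\sum_{\sigma''} \theta^i_{\sigma''} = 2\pi$ still leaves no room for a positive curvature defect. Concretely, I expect the clean argument to be: the apex angles already saturate $2\pi$, so $\pi \cdot N \geq 2\pi + (\text{nonneg outer angles}) + (\text{positive curvature term})$ where $N$ is the number of star triangles; but $N$ equals the degree of $i$, and comparing $\pi N$ with $2\pi$ plus the strictly positive curvature contribution coming from the nondegenerate $\sigma'$ — after noting the degenerate triangles contribute exactly $\pi = \sum_{l}\theta^l - 0$ with zero curvature integral — pins down a contradiction. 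The delicate point is ensuring the degenerate triangles really contribute their angle sum exactly $\pi$ with zero curvature (which is admissibility condition (4) together with $\tilde\varphi(\sigma'')$ having zero area), so that the "slack" $\pi N - 2\pi$ is entirely consumed by degenerate triangles and none is left to absorb the curvature of $\sigma'$.
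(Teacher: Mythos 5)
There is a genuine gap, and it is exactly the hypothesis you never use --- the $w$-balanced condition --- that is the missing ingredient. The Gauss--Bonnet bookkeeping you propose does not close: summing \eqref{GB} over the $N$ triangles of the star of $i$ gives $\pi N \geq 2\pi + (\text{outer angles}) + \sum_{\sigma''}\bigl(-\int_{\tilde\varphi(\sigma'')}K\,dA\bigr)$, and for $N\geq 3$ this is not a contradiction. Each star triangle, degenerate or not, already satisfies its own inequality $\sum_{l\in\sigma''}\theta^l_{\sigma''}-\int_{\tilde\varphi(\sigma'')}K\,dA\leq\pi$, so the slack $\pi(N-2)$ on the left comfortably absorbs the strictly positive curvature term of a nondegenerate $\sigma'$; your hoped-for conclusion that the outer angles are forced to be strictly negative does not follow. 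In fact, at the level of angles alone the statement can fail: a degenerate triangle with $\theta^i_\sigma=\pi$ can be adjacent to nondegenerate triangles whose apex angles fill up the remaining $\pi$, with Lemma \ref{star} and \eqref{GB} both satisfied --- only the balance equation at $i$ rules this out. (A side remark: your claim that $j$ and $k$ must lie in the same equivalence class as $i$ is neither needed nor true in general, since $\theta^i_\sigma=\pi$ can occur with $l_{ij},l_{ik}>0$ when $\varphi(\partial\sigma)$ is a geodesic arc through $q_i$.)

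The paper's argument is local and uses balance rather than curvature. From $\theta^i_\sigma=\pi$ one gets $v_{ij'}=-v_{ij}$ for the two edges $ij,ij'$ of $\sigma$ at $i$. If some edge $ik$ with $l_{ik}>0$ had $v_{ik}$ not parallel to $v_{ij}$, then $\angle v_{ij}0v_{ij'}+\angle v_{ij}0v_{ik}+\angle v_{ij'}0v_{ik}=2\pi$, while by Lemma \ref{star} the angles at $i$ also sum to $2\pi$; the resulting equality case forces every direction $v_{ik'}$ at $i$ into the closed half-circle bounded by $\pm v_{ij}$ containing $v_{ik}$. Taking $v_m$ to be the midpoint of that half-circle, one gets $v_m\cdot\sum_{j:ij\in E} w_{ij}l_{ij}v_{ij}\geq w_{ik}l_{ik}\,v_m\cdot v_{ik}>0$, contradicting that $\varphi$ is $w$-balanced. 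To repair your proposal you would have to inject the balance condition at $i$ in this way; Gauss--Bonnet by itself (already spent in establishing Lemma \ref{star}) cannot yield the lemma.
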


Let $\Omega$ be a connected component of the interior of $\cup \{\sigma:\sigma\in F'\}\subset|T|$, and $\tilde\Omega$ be the completion of $\Omega$ under the natural path metric on $\Omega$.
Notice that $\tilde \Omega$ could be different from the closure of $\Omega$ in $M$.

Since $\tilde\varphi$ is surjective, $F'\neq F$ and $\Omega\neq|T|$ and $\tilde\Omega$ has non-empty boundary.
Then $\tilde\Omega$ is a connected surface with a natural triangulation $T'=(V',E',F')$, and
$$
\chi(\tilde\Omega)=2-2\times(\text{genus of } \tilde\Omega)-\#\{\text{boundary components of $\tilde\Omega$}\}\leq1.
$$

Assume $V'_I$ is the set of interior vertices, and $V'_B$ is the set of boundary vertices, and $E'_I$ is the set of interior edges, and $E'_B$ is the set of boundary edges of $\tilde{\Omega}$. Then $|V'_B|=|E'_B|$, and by Lemma \ref{degenerate}, if $i\in V'_B$ and $i\in \sigma$, then $\theta^i_\sigma=0$. Therefore,
$$
\pi|F'|=\sum_{\sigma\in F',i\in \sigma}\theta^i_{\sigma}=\sum_{i\in V'_I}\sum_{\sigma\in F':i\in \sigma}\theta^i_\sigma=2\pi|V_I'|.
$$
Thus,
\begin{align*}
	1\geq \chi(\tilde{\Omega})= & |V'|-|E'|+|F'|=|V'_I|+|V'_B|-|E'_I|-|E'_B|+|F'| \\
	= & |V'_B|-|E'_I|-|E'_B|+\frac{3}{2}|F'|=-|E'_I|+\frac{3}{2}|F'| \\
	= & -|E'_I|+\frac{1}{2}(|E'_B|+2|E'_I|) =\frac{1}{2}|E_B'|.
\end{align*}
Therefore, $|V'_B|=|E'_B|\leq 2$. Since $\tilde\Omega$ has non-empty boundary, $|E_B'|=1$ or $2$. In either case, it contradicts with the fact that $T$ is a simplicial complex.

\subsection{Proof of Lemma \ref{star}}

We claim that for any $i\in V$,
$$
\sum_{\sigma:i\in \sigma}\theta^i_\sigma\geq2\pi.
$$
If $l_{ij}=0$ for any neighbor $j$ of $i$, this is a consequence of condition (3) in the definition of an admissible direction field. If $l_{ij}\neq0$, by the $w$-balanced condition,
$\{\varphi_{ij}'(0)/l_{ij}:ij\in E \}$ should not be contained in any open half unit circle, and the angle sum around $i$ should be at least $2\pi$.

By  the fact that $\tilde\varphi$ is surjective and equation (\ref{GB}), we have
$$
\sum_{i\in V}(2\pi-\sum_{\sigma:i\in \sigma}\theta^i_\sigma)+\sum_{\sigma\in F}\int_{\tilde\varphi (\sigma)}KdA
\leq\sum_{\sigma\in F}\int_{\tilde\varphi (\sigma)}KdA\leq \int_M KdA=2\pi\chi(M),
$$
and
$$
\sum_{i\in V}(2\pi-\sum_{\sigma:i\in \sigma}\theta^i_\sigma)+\sum_{\sigma\in F}\int_{\tilde\varphi (\sigma)}KdA
\geq2\pi|V|-\sum_{\sigma\in F}(\sum_{i\in \sigma}\theta^i_\sigma-\int_{\tilde{\varphi}(\sigma)}KdA)=2\pi\chi(M).
$$
Hence, the inequalities above are equalities. This fact implies that
$$\sum_{i\in V}(2\pi-\sum_{\sigma:i\in \sigma}\theta^i_\sigma) = 0.$$
Since each term in this summation is non-positive, then $\sum_{\sigma:i\in \sigma}\theta^i_\sigma = 2\pi$. The statement on the area follows similarly.

\subsection{Proof of Lemma \ref{prescribe}}
We claim that for any $k$, there exists a map $h:V_k\rightarrow\mathbb R$ such that
\begin{enumerate}
	\item $h(i)\neq h(j)$ if $i\neq j$, and
	\item for a fixed $i\in V_k$, if $l_{ij}=0$ for any neighbor $j$ of $i$, then there exist neighbors $j$ and $j'$ of $i$ in $V_k$ such that $h(j)<h(i)<h(j')$.
\end{enumerate}
Given such $h$, set $v$ as

$$ v_{ij} = \begin{cases}
      \text{sgn}[h(j)-h(i)]\cdot  v_k, & \text{ if } i\in V_k \text{ and } l_{ij}=0,\\
      \varphi_{ij}'(0), & \text{ if } l_{ij}>0,
   \end{cases}
$$
where $\text{sgn}$ is the sign function. It is easy to verify that such $v$ is satisfactory.

To construct such function $h$, we will prove the following lemma, which is more general than our claim.
\begin{lemma}
Assume $G=(V',E')$ is a subgraph of the $1$-skeleton $T^{(1)}$, and $E'\neq E$. Denote
$$
int(G)=\{i\in V':ij\in E \Rightarrow ij\in E'\},
$$
and $\partial G=V'-int(G)$. Then there exists $h:V'\rightarrow \mathbb R$ such that
\begin{enumerate}
	\item  $h(i)\neq h(j)$ if $i\neq j$, and
	\item for any $i\in int(G)$ there exist neighbors $j$ and $j'$ of $i$ in $V'$ such that $h(j)<h(i)<h(j')$.
\end{enumerate}
\end{lemma}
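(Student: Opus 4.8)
The plan is to construct $h$ by a topological sorting argument applied to a suitable acyclic orientation of the graph $G$. The key observation is that condition (2) says exactly that every vertex of $\mathrm{int}(G)$ is neither a source nor a sink once we totally order $V'$ by the values of $h$; so it suffices to produce a total order on $V'$ in which no vertex of $\mathrm{int}(G)$ is a local extremum along edges of $G$, and then take $h$ to be any order-preserving injection into $\mathbb{R}$ (e.g.\ $h(i)=$ its rank). Equivalently, I want an acyclic orientation of $G$ in which every $i\in\mathrm{int}(G)$ has both an incoming and an outgoing edge; then any linear extension of the induced partial order works.

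First I would handle the heart of the matter: producing an acyclic orientation of $G$ with no interior source and no interior sink. Since $E'\neq E$, there is at least one edge $ab\in E$ with $ab\notin E'$; if $a\in V'$ then $a\in\partial G$, which tells us $\mathrm{int}(G)\neq V'$ — there is always at least one boundary vertex (indeed, if $V'$ meets that missing edge). More carefully, the role of $\partial G$ is to serve as "free" vertices that can absorb being extrema. I would pick a spanning forest argument or, more transparently, proceed greedily: repeatedly, as long as some $i\in\mathrm{int}(G)$ is still a source (resp.\ sink) in the current orientation, reverse one edge at $i$. To make this terminate, instead start from scratch: order the vertices of $\partial G$ arbitrarily as the "bottom" and "top" layers and build the order on $\mathrm{int}(G)$ inductively. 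Concretely, I would argue by induction on $|V'|$. If $\mathrm{int}(G)=\emptyset$ the claim is trivial (any injection works, condition (2) being vacuous). Otherwise pick $i_0\in\mathrm{int}(G)$; since $G$ is a subgraph of the $1$-skeleton of a triangulated closed surface, $i_0$ has degree $\geq 3$, and in particular at least two neighbors. Remove $i_0$, apply induction to $G-i_0$ to get $h'$ on $V'\setminus\{i_0\}$, and then insert the value $h(i_0)$ strictly between $h'(j)$ and $h'(j')$ for two suitably chosen neighbors $j,j'$ of $i_0$ — one with small $h'$-value and one with large $h'$-value. The subtlety is that deleting $i_0$ may move some other interior vertex into the boundary of $G-i_0$, which only makes condition (2) easier for those vertices, so the inductive hypothesis still applies; and we must check that inserting $h(i_0)$ between $h'(j)$ and $h'(j')$ does not destroy condition (2) for $j$ and $j'$ themselves — but since we only need each of them to have \emph{some} neighbor above and \emph{some} below, and inserting a new value adjacent to them cannot remove the neighbors they already had on the correct sides (those inequalities are preserved, as we can always insert into an arbitrarily thin gap), this is fine.

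The main obstacle, and the step I'd spend the most care on, is the insertion step: I need to guarantee that among the neighbors of $i_0$ in $V'$ there is at least one with $h'$-value below where I want to place $i_0$ and one above, \emph{and} that the placement is consistent for all vertices simultaneously. The clean way is: among all neighbors of $i_0$ in $V'$, let $j$ minimize $h'$ and $j'$ maximize $h'$ (these are distinct since $i_0$ has $\geq 2$ neighbors), and set $h(i_0)$ to be any value in the open interval $(h'(j),h'(j'))$ that is not equal to any existing value — such a value exists since that interval contains infinitely many reals. Then $h(j)<h(i_0)<h(j')$ gives condition (2) for $i_0$. For any \emph{other} interior vertex $i$ of $G$ (equivalently, interior vertex of $G-i_0$ unless it became boundary), its neighbors and their relative order are unchanged by the insertion, so its witnessing pair from $h'$ still works. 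This closes the induction. I expect the write-up to be short once this inductive skeleton is fixed; the only thing to state explicitly is that every vertex of the $1$-skeleton of a triangulation of a closed surface has degree at least $3$, which is what guarantees $i_0$ has two distinct neighbors.
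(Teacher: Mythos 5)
Your inductive step has a genuine gap. When you delete the interior vertex $i_0$, every vertex $i$ that is interior in $G$ and adjacent to $i_0$ necessarily becomes a \emph{boundary} vertex of $G-i_0$ (the edge $ii_0$ is in $E$ but no longer in the edge set of $G-i_0$). The inductive hypothesis therefore gives no witnessing pair for such an $i$: the function $h'$ on $G-i_0$ is allowed to make $i$ a strict local maximum or minimum among its remaining neighbors. Your final verification says that for every other interior vertex of $G$ ``its witnessing pair from $h'$ still works,'' with the parenthetical ``unless it became boundary''---but those are exactly the vertices you must still handle, since in $G$ they are interior and condition (2) is required for them. Inserting $h(i_0)$ somewhere in the open interval between the minimal and maximal $h'$-values of the neighbors of $i_0$ cannot in general repair this: if one such neighbor $i_1$ is a local maximum among its remaining neighbors (so you need $h(i_0)>h'(i_1)$) and another such neighbor $i_2$ is a local minimum (so you need $h(i_0)<h'(i_2)$), and $h'(i_1)>h'(i_2)$, the two requirements are incompatible, and nothing in your construction rules this configuration out. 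So the induction as stated does not close; the reduction ``only makes condition (2) easier'' is true for applying the hypothesis to $G-i_0$, but it is precisely what destroys the guarantee you need at the end.

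By contrast, the paper's proof avoids this issue by not deleting vertices at all: it assigns distinct values to $\partial G$ (using $|\partial G|\geq 2$), solves the discrete harmonic equation with this Dirichlet data, so that the mean-value property forces every interior vertex either to have neighbors strictly above and strictly below, or to lie on a constant plateau; the plateaus are proper subgraphs, handled by induction on $|V'|$, and the resulting functions are added as small perturbations $\bar h+\sum_i\epsilon_i h_i$, which preserves the strict inequalities already obtained while breaking ties inside each plateau. If you want to salvage a purely combinatorial induction, you need a mechanism (like the harmonic/level-set structure, or a strengthened inductive statement controlling the values at the neighbors of the removed vertex) that protects the vertices adjacent to $i_0$; as written, your argument does not provide one.
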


\begin{proof}
We prove by the induction on the size of $V'$. The case $|V'|=1$ is trivial. For the case $|V'|\geq2$, first notice that $|\partial G| \geq 2$ for any proper subgraph $G$ of $T^{(1)}$. Assign distinct values $\bar h(i)$ to each $i\in \partial G$, then solve the discrete harmonic equation
$$
\sum_{j:ij\in E} (\bar h(j)-\bar h(i))=0, \quad\forall i\in int(G),
$$
with the given Dirichlet boundary condition on $\partial G$.

Let $s_1<...<s_k$ be the all distinct values that appear in $\{\bar h(i):i\in V'\}$. Then consider the subgraphs $G_i = (V'_i, E'_i)$ defined as
$$
V_i'=\{j\in V':h(j)=s_i\},
$$
and
$$
E_i'=\{jj'\in E': j,j'\in V_i'\}.
$$
Notice that $|\partial G|\geq2$, so $k\geq2$ and $|V_i'|<|V'|$ for any $i=1,...,k$.
By the induction hypothesis, there exists a function $h_i:V_i'\rightarrow \mathbb R$ such that
\begin{enumerate}
	\item $h_i(j)\neq h_i(j')$ if $j\neq j'$, and
	\item for any $j\in int(G_i)$ there exist neighbors $j',j''$ of $j$ in $V'_i$ such that $h_i(j')<h_i(j)<h_i(j'')$.
\end{enumerate}
Define $h_i(j)=0$ if $j\notin V_i'$, then for sufficiently small positive $\epsilon_1,...,\epsilon_k$,
$${h} = \bar h+\sum_{i=1}^k \epsilon_ih_i$$
is a desirable function.
\end{proof}

\subsection{Proof of Lemma \ref{flatstar}}
It amounts to prove that if $i,i'\in V_k$ and $ij,i'j'\in E$ and $l_{ij}>0$ and $l_{i'j'}>0$, then $\varphi'_{ij}(0)\|\varphi'_{i'j'}(0)$.
Let
$$
D=\left(\cup_{i,i',i''\in V_k}\triangle i i' i''\right) \cup \left(\cup_{i,i'\in V_k}ii'\right),
$$
which is a closed path-connected set in $|T|$.
For any $i\in V_k$, $i\in \partial D$ if and only if there exists $ij\in E$ with $l_{ij}>0$. Therefore, it suffices to prove that

\begin{enumerate}
	\item for any $i\in V_k$ and edges $ij,ij'$ with $l_{ij}>0$ and $l_{ij'}>0$, $\varphi_{ij}'(0)\|\varphi_{ij'}'(0)$, and
	\item for any $ij\in E$ satisfying that $ij\subset\partial D$, there exists $m\in V-V_k$ such that $\triangle ijm\in F$, and thus $\varphi_{im}'(0)=\varphi_{jm}'(0)$, and
	\item $\partial D$ is connected.
\end{enumerate}

For part (1), if it is not true, then there exists $i\in V_k$ and $ij\in E$ and $ij'\in E$ such that $l_{ij}>0$ and $l_{ij'}>0$ and $ \varphi_{ij}'(0)$ is not parallel to $\varphi_{ij'}'(0)$. Assuming this claim,  by the $w$-balanced condition, there exists $ij''\in E$ with $l_{ij''}>0$, and the three vectors $ \varphi_{ij}'(0),  \varphi_{ij'}'(0),  \varphi_{ij''}'(0)$ are not contained in any closed half-space in $T_{q_i}M$. Assume $im\in E$ and $l_{im}=0$, and without loss of generality,  $ij,im,ij',ij''$ are ordered counter-clockwise in the one-ring neighborhood of $i$ in $T$. By Lemma \ref{prescribe}, there exists an admissible $v$ such that $ v_{im}\| \varphi_{ij''}'(0)$.  By possibly changing a sign, we may assume that $ v_{im}=\varphi_{ij''}'(0)/l_{ij''}$.

\begin{figure}[h!]
  \includegraphics[width=0.6\linewidth]{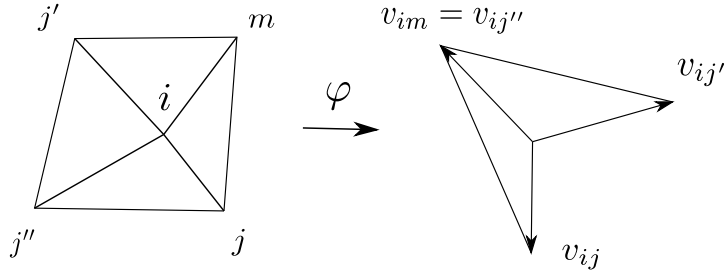}
  \caption{Overlapping triangles lead to angle surplus.}
    \label{balanced}
\end{figure}
Then as Figure \ref{balanced} shows, a contradiction follows
$$
2\pi=\sum_{\sigma i\in \sigma}\theta^i_\sigma\geq\angle  v_{ij}0 v_{im}+\angle  v_{im}0 v_{ij'}+\angle  v_{ij'}0  v_{ij''}+\angle  v_{ij''}0 v_{ij}
$$
$$
=2\angle  v_{ij}0 v_{ij''}+2\angle  v_{ij''}0 v_{ij'}>2\pi.
$$

Part (2) is straightforward and we will prove part (3).
By our assumption on the extension $\tilde\varphi$, $\tilde\varphi(D)$ contains only one point, then the embedding map $i_D=\psi^{-1}\circ(\psi|_D)$ from $D$ to $|T|$ is homotopic to the constant map $\psi^{-1}\circ(\tilde\varphi|_D)$, meaning that $D$ is contractible in $|T|$. If $\partial D$ contains at least two boundary components, then it is not difficult to show that $|T|-D$ has a connected component $D'$ homeomorphic to an open disk. Let $\Phi_{ D}:D\to\tilde M$ be a lifting of $\tilde\varphi|_{D}$. 
Then $\Phi_{ D}(\partial D')\subset\Phi_{D}(D)$ contains only a single point $x\in\tilde M$. Then by the $w$-balanced condition, it is not difficult to derive a maximum principle and show that $\tilde\varphi|_{ D'}$ equals to constant $x$. Then by the definition of $D$ it is easy to see that $D'$ should be a subset of $D$. It is contradictory.

\subsection{Proof of Lemma \ref{degenerate}}
Assume $ij$ and $ij'$ are two edges in $\sigma$. If the conclusion is not true, then there exists $ik\in E$ such that $l_{ik}>0$ and $ v_{ik}$ is not parallel to $ v_{ij}$. Notice that $ v_{ij}=- v_{ij'}$, and we have
$$
2\pi=\sum_{\sigma\in F:i\in \sigma}\theta^i_\sigma\geq \angle  v_{ij}0  v_{ij'}+\angle  v_{ij}0 v_{ik}+\angle   v_{ij'}0 v_{ik}=2\pi.
$$
Then the equality holds in the above inequality, and for any $ik'\in E$, $ v_{ik'}$ should be on the half circle that contains $ v_{ij}, v_{ik}, v_{ij'}$. Let $ v_m$ be the middle point of this half circle, then
$$
 v_m\cdot\sum_{j:ij\in E}w_{ij}l_{ij} v_{ij}\geq w_{ik}l_{ik} v_m\cdot  v_{ik}>0.
$$
This contradicts to the fact that $\varphi$ is $w$-balanced.

\bibliography{ref}

\providecommand{\bysame}{\leavevmode\hbox to3em{\hrulefill}\thinspace}
\providecommand{\MR}{\relax\ifhmode\unskip\space\fi MR }
\providecommand{\MRhref}[2]{%
  \href{http://www.ams.org/mathscinet-getitem?mr=#1}{#2}
}
\providecommand{\href}[2]{#2}
\begin{thebibliography}{10}

\bibitem{awartani1987spaces}
Marwan Awartani and David~W Henderson, \emph{Spaces of geodesic triangulations
  of the sphere}, Transactions of the American Mathematical Society
  \textbf{304} (1987), no.~2, 721--732.

\bibitem{bloch1984space}
Ethan~D Bloch, Robert Connelly, and David~W Henderson, \emph{The space of
  simplexwise linear homeomorphisms of a convex 2-disk}, Topology \textbf{23}
  (1984), no.~2, 161--175.

\bibitem{bridson2013metric}
Martin~R Bridson and Andr{\'e} Haefliger, \emph{Metric spaces of non-positive
  curvature}, vol. 319, Springer Science \& Business Media, 2013.

\bibitem{burago2001course}
Dmitri Burago, Iu~D Burago, Yuri Burago, Sergei Ivanov, Sergei~V Ivanov, and
  Sergei~A Ivanov, \emph{A course in metric geometry}, vol.~33, American
  Mathematical Soc., 2001.

\bibitem{cairns1944isotopic}
Stewart~S Cairns, \emph{Isotopic deformations of geodesic complexes on the
  2-sphere and on the plane}, Annals of Mathematics (1944), 207--217.

\bibitem{chambers2021morph}
Erin~Wolf Chambers, Jeff Erickson, Patrick Lin, and Salman Parsa, \emph{How to
  morph graphs on the torus}, Proceedings of the 2021 ACM-SIAM Symposium on
  Discrete Algorithms (SODA), SIAM, 2021, pp.~2759--2778.

\bibitem{connelly1983problems}
Robert Connelly, David~W Henderson, Chung~Wu Ho, and Michael Starbird, \emph{On
  the problems related to linear homeomorphisms, embeddings, and isotopies},
  Continua, decompositions, manifolds, 1983, pp.~229--239.

\bibitem{de1991comment}
Y~Colin de~Verdiere, \emph{Comment rendre g{\'e}od{\'e}sique une triangulation
  d'une surface}, L'Enseignement Math{\'e}matique \textbf{37} (1991), 201--212.

\bibitem{earle1969fibre}
Clifford~J Earle, James Eells, et~al., \emph{A fibre bundle description of
  teichm{\"u}ller theory}, Journal of Differential Geometry \textbf{3} (1969),
  no.~1-2, 19--43.

\bibitem{floater2003one}
Michael Floater, \emph{One-to-one piecewise linear mappings over
  triangulations}, Mathematics of Computation \textbf{72} (2003), no.~242,
  685--696.

\bibitem{floater2003mean}
Michael~S Floater, \emph{Mean value coordinates}, Computer Aided Geometric
  Design \textbf{20} (2003), no.~1, 19--27.

\bibitem{gaster2018computing}
Jonah Gaster, Brice Loustau, and L{\'e}onard Monsaingeon, \emph{Computing
  discrete equivariant harmonic maps}, arXiv preprint arXiv:1810.11932 (2018).

\bibitem{gortler2006discrete}
Steven Gortler, Craig Gotsman, and Dylan Thurston, \emph{Discrete one-forms on
  meshes and applications to 3d mesh parameterization}, Computer Aided
  Geometric Design \textbf{23} (2006), no.~2, 83--112.

\bibitem{hass2012simplicial}
Joel Hass and Peter Scott, \emph{Simplicial energy and simplicial harmonic
  maps}, Asian Journal of Mathematics \textbf{19} (2015), no.~4, 593--636.

\bibitem{1910.03070}
Yanwen Luo, \emph{Spaces of geodesic triangulations of surfaces}, arXiv
  preprint arXiv:1910.03070. \textit{Submitted} (2019).

\bibitem{smale1959diffeomorphisms}
Stephen Smale, \emph{Diffeomorphisms of the 2-sphere}, Proceedings of the
  American Mathematical Society \textbf{10} (1959), no.~4, 621--626.

\bibitem{tutte1963draw}
William~Thomas Tutte, \emph{How to draw a graph}, Proceedings of the London
  Mathematical Society \textbf{3} (1963), no.~1, 743--767.

\end{thebibliography}
\bibliographystyle{amsplain}

\end{document}